\newcommand\blfootnote[1]{%
\begingroup
\renewcommand\thefootnote{}\footnote{#1}%
\addtocounter{footnote}{-1}%
\endgroup}
\newtheorem{thm}{Theorem}[section]
\newtheorem{notation}{Notation}[section]
\newtheorem{lemma}[thm]{Lemma}
\newtheorem{remark}{Remark}[section]
\newtheorem{theorem}[thm]{Theorem}
\newtheorem{proposition}[thm]{Proposition}
\numberwithin{equation}{section}
\begin{document}

\title[exact steady states of a chemotaxis model]{Unveiling Explicit Patterns: Exact Steady States and Stability in a Confined Chemotaxis Model}

\author[Y. Huang]{Yue Huang}
\address[Y. Huang]{School of Mathematical Sciences, Harbin Engineering University, Harbin, 150001, China}
\email{huangyue@hrbeu.edu.cn}

\author[L. Xue]{Ling Xue}
\address[L. Xue]{School of Mathematical Sciences, Harbin Engineering University, Harbin, 150001, China}
\email{lxue@hrbeu.edu.cn}

\author[K. Zhao]{Kun Zhao}
\address[K. Zhao]{School of Mathematical Sciences, Harbin Engineering University, Harbin, 150001, China}
\email{kzhao@hrbeu.edu.cn}

\author[X. Zheng]{Xiaoming Zheng}
\address[X. Zheng]{Department of Mathematics, Central Michigan University, Mount Pleasant, MI 48859, USA}
\email{zheng1x@cmich.edu}

\blfootnote{{\it 2020 Mathematics Subject Classification}: 35Q92, 35K51, 35B40, 35B65}

\blfootnote{{\it Key words and phrases}: chemotaxis; explicit solution; global well-posedness; long-time behavior}

\begin{abstract}
Inspired by Carrillo-Li-Wang's work [Proc. London Math. Soc., 2021] on stationary solutions to the singular Keller-Segel system, this paper presents a novel family of explicit steady-state solutions for the same model on a bounded interval, expressed in terms of trigonometric and hyperbolic functions. Under Dirichlet boundary conditions and within a biologically stable parameter regime, these solutions, including singular types such as secant and cosecant, are rigorously derived and analyzed. Their stability is established via energy methods, yielding precise thresholds for pattern persistence. These results provide valuable benchmarks for numerical validation and offer insights into boundary-driven pattern formation.
\end{abstract}

\maketitle

\section{Introduction}

Since Newton’s inception of differential equations, the search for explicit solutions has constituted a central mathematical challenge and a vital tool for interpreting applied scientific problems. This is especially evident in the study of nonlinear reaction-diffusion systems, where explicit solutions serve as key benchmarks for numerical methods and yield direct insight into pattern formation mechanisms. The Keller-Segel model is particularly notable: it captures striking examples of biological self-organization, from bacterial band formation to tumor-induced angiogenesis, yet presents deep mathematical challenges due to its strongly nonlinear structure. In this paper, we construct a new family of explicit steady-state solutions for a Keller-Segel-type model, expressed in terms of trigonometric and hyperbolic functions, and accompany them with a rigorous stability analysis. These solutions serve a dual purpose: they provide exact benchmarks for validating computational algorithms in spatially confined domains, and they allow precise analytical identification of stability thresholds that govern pattern selection.

\subsection{Model and Relevance}

We consider the system:
\begin{subequations}\label{model}
\begin{alignat}{2}
u_t &= \mathsf{d} u_{xx} - \chi [u(\ln c)_x]_x, \quad &x&\in\mathbb{R},\ t>0, \label{model-1} \\
c_t &= \varepsilon c_{xx} - \sigma c - \mu u c^\mathsf{m}, \quad &x&\in\mathbb{R},\ t>0, \label{model-2}
\end{alignat}
\end{subequations}
with the following physical interpretation:
\begin{itemize}
\item $u(x,t)$ is the cell density, and $c(x,t)$ is the chemical attractant concentration;
\item parameters $\mathsf{d}$, $\chi$, $\varepsilon$, $\sigma$, $\mu$, and $\mathsf{m}$ represent diffusion rate, chemotactic sensitivity, chemical diffusivity, degradation/production rate, source strength, and nonlinear kinetic exponent, respectively.
\end{itemize}
This generalized framework includes several important applied models:
\begin{itemize}
\item Classical Keller-Segel ($\varepsilon\geqslant 0$, $0\leqslant \mathsf{m}<1$): models bacterial band formation in response to oxygen gradients \cite{KS3}.
\item Othmer-Stevens \& Levine-Sleeman ($\varepsilon=0$, $\mathsf{m}=1$): describes nondiffusive chemical signaling in cellular communication \cite{LS,OS}.
\item Levine-Nilsen-Hamilton-Sleeman ($\varepsilon>0$, $\mathsf{m}=1$): applies to tumor angiogenesis and protease inhibitor dynamics \cite{LSN}.
\end{itemize}

The logarithmic sensitivity term follows the Weber-Fechner law of perceptual response \cite{WF}, which reflects biological sensing mechanisms but introduces mathematical challenges due to its singular nature. This nonlinearity allows the model to capture essential phenomena such as wave propagation and pattern formation, yet it complicates both analytical treatment and numerical approximation.

\subsection{Development and Motivation}

Mathematical analysis of the chemotaxis system \eqref{model} with $\mathsf{m}=1$ has advanced considerably since the early 2000s, beginning with foundational work by Fontelos-Friedman-Hu on global well-posedness \cite{FFH}. Subsequent developments include:
\begin{itemize}
\item global existence and long-time behavior analysis \cite{Spike,FTWZZ,FXXZ,FZZ,FZ,L1,GXZZ,1d1,1d2,DL,1d3,MD3,1d4,1d6,1d7,ZZ,ZZMZ,ZLWZ},
\item boundary layer formation in diffusivity limits \cite{Spike,FTWZZ,FZZ,DL3,DL2,DL1,1d5},
\item characterization of traveling waves and steady states \cite{mTW-1,mTW-2,TW1,TW2,TW3,TW4,TW5,TW6,TW7,TW8,TW9}.
\end{itemize}
In contrast, the multi-dimensional case, featuring energy criticality and supercriticality, poses greater difficulties, and progress there has been more limited. To the best of our knowledge, existing results address global dynamics and diffusivity limits under mild conditions \cite{DL3,DL,MD3,PWZ,MD1,MD2,WXY,Winkler-large-1,Winkler-large-2}, as well as local stability of planar traveling waves \cite{mTW-1,mTW-2}.

Significant challenges remain, however, especially concerning non-constant steady states in bounded domains, precisely the setting of most experimental configurations. From an applied viewpoint, working on finite intervals introduces boundary effects that fundamentally alter solution behavior compared to the Cauchy problem. These boundary-induced phenomena are especially relevant for laboratory experiments in microfluidic chambers, tissue engineering scaffolds, and other confined geometries where boundary conditions actively influence pattern selection.

In a series of papers, we will construct explicit steady-state solutions for system \eqref{model} on bounded domains under Dirichlet boundary conditions, in both one and multiple spatial dimensions, accompanied by a rigorous analysis of their asymptotic stability. Owing to substantial technical challenges, primarily concerning the stability analysis, this comprehensive work must be divided into a sequence of individual studies. The present paper, as the first installment, addresses the foundational case of $\mathsf{m}=1$.

\subsection{Explicit Steady-State Solutions}

When $\mathsf{m}=1$, by taking the Cole-Hopf transformation: $v=(\ln c)_x$, system \eqref{model} is transformed into:
\begin{subequations}\label{Tmodel}
\begin{alignat}{2}
u_t &= \mathsf{d} u_{xx} - \chi (uv)_x, \label{Tmodel-1} \\
v_t &= \varepsilon v_{xx} + 2\varepsilon v v_x - \mu u_x. \label{Tmodel-2}
\end{alignat}
\end{subequations}
The steady-state system we analyze then takes the form:
\begin{subequations}\label{ss}
\begin{alignat}{2}
\mathsf{d} \bar{u}_{xx} - \chi (\bar{u}\bar{v})_x &=0, \label{ss-1} \\
\varepsilon \bar{v}_{xx} + 2\varepsilon \bar{v} \bar{v}_x - \mu \bar{u}_x &=0,\label{ss-2}
\end{alignat}
\end{subequations}
Through extensive investigation, we have identified three distinct types of explicit solutions to system \eqref{ss}, summarized in Table~\ref{table-1}.
\begin{table}[h] 
\centering
\caption{Explicit solutions to system \eqref{ss}}\label{table-1}
\begin{tabular}{|c|c|c|}
\hline
Type & $\bar{u}$& $\bar{v}$\\
\hline
Power & $(\mathfrak{a}x+\mathfrak{b})^{-2}$ 
& $-2\mathfrak{a}\mathsf{d}\chi^{-1}\,(\mathfrak{a}x+\mathfrak{b})^{-1}$ \\
\hline
Trigonometric & $\begin{aligned}
\sec^{2}(\mathfrak{a}x+\mathfrak{b})\\
\csc^{2}(\mathfrak{a}x+\mathfrak{b})
\end{aligned}$  
& $\begin{aligned}
2\mathfrak{a}\mathsf{d}\chi^{-1}\,\tan(\mathfrak{a}x+\mathfrak{b})\\
-2\mathfrak{a}\mathsf{d}\chi^{-1}\,\cot(\mathfrak{a}x+\mathfrak{b})
\end{aligned}$ \\
\hline
Hyperbolic & $\operatorname{csch}^{2}(\mathfrak{a}x+\mathfrak{b})$ 
& $-2\mathfrak{a}\mathsf{d}\chi^{-1}\,\operatorname{coth}(\mathfrak{a}x+\mathfrak{b})$ \\
\hline
\end{tabular}
\end{table}
We stress that various conditions on $\mathfrak{a}$, $\mathfrak{b}$, and boundary data are required to ensure validity of the solutions, a detailed discussion is deferred to the next section. Notably, while the power function case recovers known steady states on the half-line \cite{FZZ}, the trigonometric and hyperbolic solutions are novel contributions. The singular nature of secant and cosecant functions makes them exclusive to bounded domains, underscoring how spatial confinement generates solution behaviors not observed in infinite domains. These explicit solutions offer valuable reference cases for testing computational algorithms, particularly for verifying code performance near singularities and boundaries.

A major application of our analytical results lies in providing exact benchmarks for numerical validation. The explicit solutions allow precise assessment of computational schemes for handling the challenging logarithmic nonlinearity and associated boundary conditions. Furthermore, our stability analysis identifies parameter regions where patterns persist or dissolve, information crucial for designing experimental systems with targeted dynamical properties. Specifically, we study the stability of system \eqref{model} under the parameter regime:
\begin{align}\label{para}
\mathsf{d}>0,\qquad \varepsilon>0,\qquad \chi\mu>0.
\end{align}
While positivity of the diffusion coefficients ensures nontrivial steady-state solutions, the condition $\chi\mu>0$ corresponds to biologically stable scenarios: either chemoattraction with chemical consumption or chemorepulsion with chemical production. This condition also ensures well-posedness via hyperbolic stabilization in the associated balance law formulation. Our approach illustrates how carefully constructed special solutions can elucidate general model behavior, especially regarding the interplay between domain size, boundary conditions, and pattern formation.

The rest of the paper is organized as follows: Section~2 details the construction and validity conditions for explicit steady states. Section~3 establishes nonlinear stability results with explicit parameter thresholds. Section~4 presents numerical verification and explores dynamics beyond analytically tractable regimes. Section~5 closes the paper with concluding remarks and future research directions.


\section{Validity conditions} 

This section delineates the conditions under which the explicit steady-state solutions summarized in Table~\ref{table-1} are mathematically valid. We consider system \eqref{Tmodel} defined on the bounded interval $\mathcal{I} = (0,1)$, subject to Dirichlet-type boundary conditions:
\begin{align}\label{BC0}
u(0) = \alpha_1, \quad u(1) = \alpha_2; \qquad v(0) = \beta_1, \quad v(1) = \beta_2.
\end{align}
To construct explicit solutions of system \eqref{ss} satisfying \eqref{BC0}, a natural approach is to integrate \eqref{ss-1} once (ignoring constant of integration), leading to the ansatz:
$$
\bar{v}=\frac{\mathsf{d}}{\chi} \frac{\bar{u}_x}{\bar{u}}.
$$
Substituting this expression into \eqref{ss-2} yields a third-order ordinary differential equation for $\bar{u}$:
\begin{align}\label{ss-3}
\varepsilon \mathsf{d} \chi \bar{u}_{xxx}\bar{u}^2 - \varepsilon\mathsf{d}(3\chi-2\mathsf{d})\bar{u}_{xx}\bar{u}_x\bar{u} + 2\varepsilon\mathsf{d}(\chi-\mathsf{d})\bar{u}_x^3 - \mu \chi^2\bar{u}_x\bar{u}^3 = 0.
\end{align}
Through direct calculation, it is readily verified that the functions presented in Table \ref{table-1} solve \eqref{ss-3} and \eqref{BC0}, subject to the parametric and boundary constraints elaborated in the following subsection.

\subsection{Constraints and Monotonicity}
We focus on the case of $\chi>0$ \& $\mu>0$, which is consistent with the last requirement specified in \eqref{para}. The other case, i.e., $\chi<0$ \& $\mu<0$,  can be treated in a similar fashion. For fixed parameters $\mathsf{d}>0$, $\varepsilon>0$, $\chi>0$, and $\mu>0$, a direct calculation shows that for all the four types of functions specified in Table \ref{table-1}, the parameter $\mathfrak{a}$ is given by 
\begin{align}
\mathfrak{a}=\pm \kappa \triangleq \pm\sqrt{\frac{\mu\chi^2}{2\mathsf{d}\varepsilon(2\mathsf{d}+\chi)}}.
\end{align}
Next, we determine the value of $\mathfrak{b}$ and specify the constraints on the boundary values. 

First of all, since $u$ represents cell density and we are interested in the existence of non-constant steady-state solutions, it is natural to require 
\begin{align}
0<\alpha_1=u(0) \neq u(1)=\alpha_2>0.
\end{align}
Second, because all steady-state solutions involve the squares of elementary functions, $\mathfrak{b}$ can take either positive or negative values. Combined with the alternating sign of $\mathfrak{a}$, a full characterization of the parametric and boundary constraints becomes rather involved. To keep the presentation accessible, we restrict our discussion to the case where both \underline{$\mathfrak{a}$ and $\mathfrak{b}$ are positive}. The remaining cases can be analyzed in a similar manner. Furthermore, we will follow the same convention in the next section when analyzing the asymptotic stability of the steady-state solutions.

For the {\bf power function}, letting $x=0$ in $(\mathfrak{a}x+\mathfrak{b})^{-2}$, we obtain
\begin{align}
\mathfrak{b}=\alpha_1^{-1/2}.
\end{align}
Using $\mathfrak{a}$ and $\mathfrak{b}$, letting $x=1$ in $(\mathfrak{a}x+\mathfrak{b})^{-2}$, the right-boundary value of $u$ satisfies the constraint:
\begin{equation}\label{R2}
\begin{aligned}
\alpha_2^{-1/2}=\mathfrak{a}+\mathfrak{b}=\kappa+\alpha_1^{-1/2}.
\end{aligned}
\end{equation}
In the same way, $\beta_1$ and $\beta_2$ are determined by (see Table \ref{table-1})
\begin{align}
\beta_1=-\frac{2\kappa \mathsf{d}}{\mathfrak{b}\chi},\qquad 
\frac{1}{\beta_2} = \frac{1}{\beta_1} -\frac{\chi}{2\mathsf{d}}.
\end{align}
The constraints for the trigonometric and hyperbolic functions are listed below:

\vspace{.1 in}

$\bullet$\ \ $\boldsymbol{\sec^2}$ \& $\boldsymbol{\tan}$:
\begin{equation}
\begin{aligned}
\mathfrak{b}=\sec^{-1}(\sqrt{\alpha_1}); \quad &\alpha_1\geqslant 1, \ \kappa + \sec^{-1}(\sqrt{\alpha_1}) < \frac{\pi}{2}; \quad \sec^{-1}(\sqrt{\alpha_2})=\sec^{-1}(\sqrt{\alpha_1})+\kappa; \\ 
&\beta_1=2\kappa \mathsf{d}\chi^{-1}\tan{\mathfrak{b}},\quad \tan^{-1}\left(\frac{\beta_2\chi}{2\kappa \mathsf{d}}\right)=\tan^{-1}\left(\frac{\beta_1\chi}{2\kappa\mathsf{d}}\right) + \kappa.
\end{aligned}
\end{equation}

$\bullet$\ \ $\boldsymbol{\csc^2}$ \& $\boldsymbol{\cot}$: 
\begin{equation}
\begin{aligned}
\mathfrak{b}=\csc^{-1}(\sqrt{\alpha_1}); \quad &\alpha_1\geqslant 1, \ \kappa + \csc^{-1}(\sqrt{\alpha_1}) < \frac{\pi}{2}; \quad \csc^{-1}(\sqrt{\alpha_2})=\csc^{-1}(\sqrt{\alpha_1})+\kappa; \\
&\beta_1=-2\kappa\mathsf{d}\chi^{-1}\cot{\mathfrak{b}},\quad \cot^{-1}\left(-\frac{\beta_2\chi}{2\kappa\mathsf{d}}\right)=\cot^{-1}\left(-\frac{\beta_1\chi}{2\kappa \mathsf{d}}\right) + \kappa.
\end{aligned}
\end{equation}

$\bullet$\ \ $\boldsymbol{\operatorname{csch}^2}$ \& $\boldsymbol{\coth}$:
\begin{equation}
\begin{aligned}
\mathfrak{b}=\operatorname{csch}^{-1}\sqrt{\alpha_1}; \quad 
&\operatorname{csch}^{-1}(\sqrt{\alpha_2})=\operatorname{csch}^{-1}(\sqrt{\alpha_1})+\kappa; \\
&\beta_1=-2\kappa \mathsf{d}\chi^{-1}\operatorname{coth}{\mathfrak{b}},\quad \operatorname{coth}^{-1}\left(-\frac{\beta_2\chi}{2\kappa \mathsf{d}}\right) = \operatorname{coth}^{-1}\left(-\frac{\beta_1\chi}{2\kappa \mathsf{d}}\right)+\kappa.
\end{aligned}
\end{equation}
It should be mentioned that, unlike the power and hyperbolic functions, when deriving the constraints for the secant and cosecant functions, we restrict their domain to be within the interval $(0,\frac{\pi}{2})$, i.e., we require that $\mathfrak{a}x+\mathfrak{b} \in (0,\frac{\pi}{2})$ for $\forall\,x\in[0,1]$. Under such a restriction, both functions are continuous, bounded, and more importantly, strictly monotonic. The readers will see that the monotonicity of the steady-state solutions plays a pivotal role for establishing their nonlinear stability. Following direct calculations, the monotonicity of the functions in Table \ref{table-1} can be determined according to the sign of $\mathfrak{a}$. The results are summarized in Table \ref{table-2}.
\begin{table}[h]
\centering
\caption{Monotonicity of functions in Table \ref{table-1}}\label{table-2}
\begin{tabular}{|c|c|c|}
\hline
Type & $\bar u$ & $\bar v$ \\
\hline
power & $\searrow$ & $\nearrow$ \\			
\hline
$\sec^2$ \& $\tan$ & $\nearrow$ & $\nearrow$ \\
\hline
$\csc^2$ \& $\cot$ & $\searrow$ & $\nearrow$ \\
\hline
$\operatorname{csch}^2$ \& $\coth$ & $\searrow$ & $\nearrow$ \\
\hline
\end{tabular}
\end{table}

\subsection{Exclusivity}
For fixed parameter values and given boundary conditions, a natural question is whether the steady-state solution is unique or not. Though this question can be answered by using the usual approach based on energy method, here we present a more fundamental argument based on function properties. For this purpose, consider for instance the steady states given by the cosecant and hyperbolic functions, i.e., $\csc^2(\mathfrak{a}x+\mathfrak{b})$ and $\operatorname{csch}^2(\mathfrak{a}x+\mathfrak{c})$, for the same $\mathfrak{a}$. 
Suppose that the two functions satisfy the same boundary conditions, i.e.,
\begin{align*}
\csc^2\mathfrak{b}&=\alpha_1, \hspace{.35 in} \csc^2(\mathfrak{a}+\mathfrak{b})=\alpha_2; \\
\operatorname{csch}^2\mathfrak{c}&=\alpha_1, \qquad \operatorname{csch}^2(\mathfrak{a}+\mathfrak{c})=\alpha_2.
\end{align*}
Since $\mathfrak{b}>0$, it suffices to consider the following scenario:  
\begin{align*}
\mathfrak{b}&=\csc^{-1}(\sqrt{\alpha_1}), \hspace{.36 in} \mathfrak{a}+\mathfrak{b}=\csc^{-1}(\sqrt{\alpha_2});\\
\mathfrak{c}&=\operatorname{csch}^{-1}(\sqrt{\alpha_1}), \qquad 
\mathfrak{a}+\mathfrak{c}=\operatorname{csch}^{-1}(\sqrt{\alpha_2}),
\end{align*}
which implies the following equation:
\begin{align}\label{x1}
\csc^{-1}(\sqrt{\alpha_1})-\operatorname{csch}^{-1}(\sqrt{\alpha_1})=\csc^{-1}(\sqrt{\alpha_2})-\operatorname{csch}^{-1}(\sqrt{\alpha_2}).
\end{align}
Now, the question is whether the function $F(x)=\csc^{-1}(\sqrt{x})-\operatorname{csch}^{-1}(\sqrt{x})$ is one-to-one or not. 
Differentiating $F(x)$ with respect to $x$, we have
\begin{align*}
F^\prime(x)=-\frac{1}{2x\sqrt{{x}-1}}+\frac{1}{2x\sqrt{{x}+1}}.
\end{align*}
Obviously, $F^\prime$ is strictly negative for $x\geqslant 1$. Hence, equation \eqref{x1} holds only when $\alpha_1=\alpha_2$, which, however, is out of the consideration of this paper. Therefore, the two functions do not coexist. The proof for the mutual exclusion between other functions is similar, and we omit it for brevity.

The mutual exclusivity of steady states can also be understood by examining the original system \eqref{model}:
\begin{align*}
u_t &= \mathsf{d} u_{xx} - \chi [u(\ln c)_x]_x, \\
c_t &= \varepsilon c_{xx} - \sigma c - \mu u c.
\end{align*}
It is straightforward to verify that the power-type steady state satisfies the system only when $\sigma = 0$; the two trigonometric-type steady states are valid only when $\sigma < 0$; and the hyperbolic-type steady state applies only when $\sigma > 0$. Moreover, the two trigonometric-type steady states exhibit opposite monotonicity. Consequently, for a given set of parameters and boundary conditions, the four types of steady states are mutually exclusive.


\section{Asymptotic Stability}

In this section, we investigate the nonlinear asymptotic stability of the steady-state solutions constructed in the preceding section. For this purpose, we consider the initial-boundary value problem for system \eqref{Tmodel} subject to the boundary conditions specified in \eqref{BC0} and initial conditions:
\begin{align*} 
(u,v)(x,0)=(u_0,v_0)(x),\quad x \in [0,1].
\end{align*}
Throughout this section, we focus on the case in which $\chi>0$, $\mu>0$, $\mathfrak{a}>0$, and $\mathfrak{b}>0$, where $\mathfrak{a}$ and $\mathfrak{b}$ are the constants associated with the steady-state solutions discussed in the preceding section. The discussions for other cases are similar, and we leave the details for interested readers.

In contrast to the analysis for trivial steady states, a major challenge in the non-trivial case arises from the non-vanishing derivatives of the steady states. These introduce additional terms in the energy estimates, which require more careful treatment and stronger assumptions to establish nonlinear stability. We now impose further parametric and boundary constraints to ensure the negativity of the quantity:
\begin{align}\label{x2}
\frac{\bar{u}_{xx}}{2} - 2\bar{v}\bar{u}_x + (\bar{u}\bar{v})_x.
\end{align} 
This is necessary because, in our stability analysis, this quantity appears alongside $\tilde{v}^2$ on the right-hand side of the energy inequality. If the quantity is negative, the corresponding integral term can be discarded, thereby simplifying the analysis. Otherwise, one would need to impose smallness conditions on the derivatives of the steady-state solution, which is an undesirable requirement.

\begin{proposition}\label{prop}
Let $(\bar{u},\bar{v})$ be any of the steady-state solution constructed in Section 2 and satisfy the constraints specified therein. Let $\lambda \triangleq \frac{4\mathsf{d}-2\chi}{2\mathsf{d}-3\chi}$. Then the quantity specified in \eqref{x2} is non-negative, provided that the following conditions are satisfied:
\begin{table}[htbp]
\centering
\begin{tabular}{|c|c|}
\hline
{\rm power} & $3\chi\leqslant 2\mathsf{d}$ \\ 
\hline
$\sec^2$ \& $\tan$ & $3\chi < 2\mathsf{d}$ \ \& \ $\alpha_1 \geqslant \lambda$ \\ 
\hline
$\csc^2$ \& $\cot$ & $3\chi < 2\mathsf{d}$ \ \& \ $\alpha_2 \geqslant \lambda$ \\
\hline
$\operatorname{csch}^2$ \& $\coth$ & $3\chi \leqslant 2\mathsf{d}$ \quad {\rm or} \quad $6\mathsf{d} > 3\chi > 2\mathsf{d}$ \ \& \ $\alpha_1 \leqslant -\lambda$ \\ 
\hline
\end{tabular}
\end{table}
\end{proposition}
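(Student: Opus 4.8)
The plan is to eliminate $\bar v$ from the quantity in \eqref{x2} via the first-order relation $\bar v = \tfrac{\mathsf{d}}{\chi}\,\bar u_x/\bar u$, which is exactly the ansatz used in Section~2 (one integration of \eqref{ss-1}), thereby reducing \eqref{x2} to a one-unknown expression whose sign is transparent. First I would write $(\bar u\bar v)_x = \bar u_x\bar v + \bar u\bar v_x$, so that $\tfrac{\bar u_{xx}}{2} - 2\bar v\bar u_x + (\bar u\bar v)_x = \tfrac{\bar u_{xx}}{2} - \bar v\bar u_x + \bar u\bar v_x$; then substitute $\bar v = \tfrac{\mathsf{d}}{\chi}\bar u_x/\bar u$ together with $\bar v_x = \tfrac{\mathsf{d}}{\chi}(\bar u\bar u_{xx} - \bar u_x^2)/\bar u^2$ and collect terms to get
\[
\frac{\bar u_{xx}}{2} - 2\bar v\bar u_x + (\bar u\bar v)_x \;=\; \frac{\chi+2\mathsf{d}}{2\chi}\,\bar u_{xx} \;-\; \frac{2\mathsf{d}}{\chi}\,\frac{\bar u_x^2}{\bar u}.
\]
Since $\chi>0$ and $\bar u>0$ on $[0,1]$, multiplying through by $2\chi\bar u$ shows this has the same sign as $G\triangleq (\chi+2\mathsf{d})\,\bar u\,\bar u_{xx} - 4\mathsf{d}\,\bar u_x^2$, so it suffices to determine the sign of $G$ for each of the four steady states.

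Next I would evaluate $G$ on the profiles of Table~\ref{table-1}. Writing $s=\mathfrak{a}x+\mathfrak{b}$, differentiating twice, and using the Pythagorean identities $\sec^2=1+\tan^2$, $\csc^2=1+\cot^2$, $\coth^2=1+\operatorname{csch}^2$ to absorb the trigonometric/hyperbolic powers, I expect $G$ to factor in every case as a strictly positive prefactor times an affine function of $\bar u$: for the power profile $G=2\mathfrak{a}^{2}(\mathfrak{a}x+\mathfrak{b})^{-6}(3\chi-2\mathsf{d})$; for $\sec^{2}$ and for $\csc^{2}$, $G=2\mathfrak{a}^{2}\sec^{4}s\,(3\chi-2\mathsf{d})(\bar u-\lambda)$ and $G=2\mathfrak{a}^{2}\csc^{4}s\,(3\chi-2\mathsf{d})(\bar u-\lambda)$; and for $\operatorname{csch}^{2}$, $G=2\mathfrak{a}^{2}\operatorname{csch}^{4}s\,(3\chi-2\mathsf{d})(\bar u+\lambda)$, with $\lambda=\tfrac{4\mathsf{d}-2\chi}{2\mathsf{d}-3\chi}$. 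The borderline value $2\mathsf{d}=3\chi$, where $\lambda$ is undefined, would be handled separately: there the $\operatorname{csch}^{2}$ bracket collapses to the negative constant $-(\chi+2\mathsf{d})$, while the power factor $3\chi-2\mathsf{d}$ is simply $0$.

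Finally I would read off the sign of $G$ from the monotonicity recorded in Table~\ref{table-2} together with the boundary constraints of Section~2. As $\bar u$ is monotone on $[0,1]$, it ranges over the interval with endpoints $\alpha_1,\alpha_2$, so the affine factor attains its extremum at an endpoint, and which endpoint is decisive depends on the sign of the slope $3\chi-2\mathsf{d}$ and on the direction of monotonicity. When $3\chi<2\mathsf{d}$ one checks $\lambda>1$, so $\bar u-\lambda$ keeps a fixed sign on $[0,1]$ exactly when the minimum of $\bar u$ does, i.e. when $\alpha_1\ge\lambda$ for the increasing profile $\sec^{2}$ and $\alpha_2\ge\lambda$ for the decreasing profile $\csc^{2}$; when $2\mathsf{d}<3\chi<6\mathsf{d}$ one checks $\lambda<0$, so $\bar u+\lambda$ keeps a fixed sign exactly when the maximum of $\bar u$ does, i.e. when $\alpha_1\le-\lambda$ for $\operatorname{csch}^{2}$; when $3\chi<2\mathsf{d}$ the $\operatorname{csch}^{2}$ factor $\bar u+\lambda$ is already of one sign (as $\bar u>0$ and $\lambda>0$), the borderline $3\chi=2\mathsf{d}$ was dealt with above, and the power case needs no boundary restriction. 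Assembling these subcases reproduces the table in the statement. The main obstacle will be the bookkeeping of the second step — differentiating each elementary or hyperbolic profile correctly and invoking the right identity to bring $G$ into affine-in-$\bar u$ form — together with the genuinely delicate point of matching the sign of $3\chi-2\mathsf{d}$ against the monotonicity direction (Table~\ref{table-2}) so as to pick out the correct extremal boundary value, plus the separate handling of the degenerate case $2\mathsf{d}=3\chi$.
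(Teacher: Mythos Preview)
Your proposal is correct and follows essentially the same route as the paper. Both begin with the identical reduction
\[
\frac{\bar u_{xx}}{2} - 2\bar v\bar u_x + (\bar u\bar v)_x \;=\; \frac{\chi+2\mathsf{d}}{2\chi}\,\bar u_{xx} - \frac{2\mathsf{d}}{\chi}\,\frac{\bar u_x^2}{\bar u},
\]
then substitute the explicit profile and invoke the relevant Pythagorean identity; the paper carries this out only for $\operatorname{csch}^2$ (writing the inequality as $(3\chi-2\mathsf{d})\operatorname{csch}^2(\mathfrak{a}x+\mathfrak{b})\leqslant 4\mathsf{d}-2\chi$ and using monotonicity to reduce to the endpoint $\alpha_1$), and leaves the remaining three cases to the reader. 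Your affine-in-$\bar u$ factorization of $G$ is just a repackaging of the same computation, applied uniformly to all four profiles, with the borderline $3\chi=2\mathsf{d}$ handled separately as you indicate.
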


\begin{proof}
We present only the proof for the hyperbolic functions. The other cases follow a similar line of reasoning. To start off, we note that
\begin{align*}
\frac{\bar{u}_{xx}}{2} - 2\bar{v}\bar{u}_x + (\bar{u}\bar{v})_x = \frac{\chi + 2\mathsf{d}}{2\chi} \bar{u}_{xx} - \frac{2\mathsf{d}}{\chi} \frac{\bar{u}_x^2}{\bar{u}}.
\end{align*}
Since $\chi > 0$, it is equivalent to show 
\begin{align}\label{x3}
\bar u_{xx} - \frac{4\mathsf{d}}{\chi+2\mathsf{d}} \frac{\bar{u}_x^2}{\bar{u}} \leqslant 0.
\end{align}
Substituting the relevant expressions of steady-state solution into \eqref{x3}, we obtain
\begin{align*}
\operatorname{csch}^2(\mathfrak{a}x+\mathfrak{b}) \leqslant \frac{4\mathsf{d} - 2\chi}{\chi+2\mathsf{d}}\,\coth^2(\mathfrak{a}x+\mathfrak{b}),
\end{align*}
which implies immediately $2\mathsf{d} > \chi$. Using $\coth^2x = \operatorname{csch}^2x + 1$, we deduce
\begin{align}\label{x4}
(3\chi-2\mathsf{d})\,\operatorname{csch}^2(\mathfrak{a}x+\mathfrak{b}) \leqslant (4\mathsf{d}-2\chi).
\end{align}
If $3\chi \leqslant 2\mathsf{d}$, then \eqref{x4} is obviously true. On the other hand, if $3\chi > 2\mathsf{d}$, we must have 
\begin{align}\label{x5}
\operatorname{csch}^2(\mathfrak{a}x+\mathfrak{b}) \leqslant \frac{4\mathsf{d}-2\chi}{3\chi-2\mathsf{d}} = -\lambda, \quad \forall\,x\in[0,1].
\end{align}
Since we have restricted $\mathfrak{a}x+\mathfrak{b}$ ($x\in [0,1]$) to the interval $(0,\frac{\pi}{2})$ and $\operatorname{csch}^2x$ is strictly decreasing on such an interval, inequality \eqref{x5} is equivalent to 
\begin{align}\label{x6}
\operatorname{csch}^2(\mathfrak{b}) \leqslant -\lambda.
\end{align}
From Section 2, we know that $\operatorname{csch}^2(\mathfrak{b})=\alpha_1$. Therefore, combining the above results, we see that inequality \eqref{x6} is fulfilled if $3\chi \leqslant 2\mathsf{d}$ or $6\mathsf{d} > 3\chi > 2\mathsf{d}$ \& $\alpha_1 \leqslant -\lambda$. This completes the proof. 
\end{proof}

Our theorem is stated for the perturbed problem. For this, subtracting \eqref{ss} from \eqref{Tmodel}, we obtain
\begin{subequations}\label{PS}
\begin{align}
\tilde{u}_t &= \mathsf{d}\tilde{u}_{xx} - \chi ( \tilde{u}\tilde{v} + \bar{u}\tilde{v} + \bar{v}\tilde{u} )_x, \label{PS-1} \\
\tilde{v}_t &= \varepsilon\tilde{v}_{xx} + \varepsilon ( \tilde{v}^2 + 2\bar{v}\tilde{v} )_x - \mu \tilde{u}_x, \label{PS-2}
\end{align}
\end{subequations}
where $\tilde u = u - \bar u$ and $\tilde v = v - \bar v$. The initial and boundary conditions for system \eqref{PS} read as:
\begin{align}
\tilde{u}(x,0) &= (u_0-\bar{u})(x), \quad \tilde{v}(x,0) = (v_0-\bar{v})(x), \qquad x\in[0,1], \label{IC}\\
\tilde{u}(x,t)&=0,\hspace{.85 in} \tilde{v}(x,t)=0, \hspace{.95 in} x=0,1,\quad t \geqslant 0. \label{BC}
\end{align}
We now present the stability result, following the introduction of some notations for brevity.

\begin{notation} 
Throughout this section, unless otherwise specified, we denote by $C$ a generic positive constant that is independent of the unknown functions and time. We use $\| \cdot \|$, $\| \cdot \|_{\infty}$, and $\|\cdot\|_k$ to represent the norms $\| \cdot \|_{L^2(\mathcal{I})}$, $\| \cdot \|_{L^{\infty}(\mathcal{I})}$, and $\|\cdot\|_{H^k(\mathcal{I})}$, respectively. Furthermore, for a suitably chosen weight function $w$, we introduce a weighted norm defined by $\|f\|^2_{k,\omega}= \sum _{j=0}^k\|\sqrt{\omega}\,\partial_x^jf\|^2$.
\end{notation}

\begin{theorem}\label{thm}
Consider the initial-boundary value problem \eqref{PS}--\eqref{BC} with positive parameters. Let $(\bar{u},\bar{v})$ be any of the steady-state solutions listed in Table \ref{table-1} with positive constants $\mathfrak{a}$ and $\mathfrak{b}$, and let the parameters and boundary values satisfy the corresponding constraints specified in Section 2 and Proposition \ref{prop}. Let the initial functions $\tilde{u}_0, \tilde{v}_0 \in H^2(0,1)$ be compatible with the boundary conditions. Suppose that  there exists a sufficiently small constant $\delta_0>0$ such that $\|\tilde{u}_0\|^2+\|\tilde{v}_0\|^2\leqslant \delta_0$. Then there exists a unique solution to the IBVP \eqref{PS}--\eqref{BC} such that $\tilde u, \tilde v \in L^{\infty}((0,\infty);H^2(0,1)) \cap L^{2}((0,\infty);H^3(0,1))$
and $\|\tilde{u}\|_2^2 + \|\tilde{v}\|_2^2$ converges to zero exponentially rapidly as $t \to \infty$.
\end{theorem}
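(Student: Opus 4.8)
The plan is to prove Theorem~\ref{thm} by the standard combination of local-in-time existence plus a priori estimates, closing a continuation argument under a smallness bootstrap. First I would establish local well-posedness of the perturbed system \eqref{PS}--\eqref{BC} in $H^2(0,1)\times H^2(0,1)$ by a fixed-point/Galerkin argument; since $\bar u,\bar v$ are smooth and bounded (here the restriction $\mathfrak a x+\mathfrak b\in(0,\pi/2)$ for the trigonometric cases, and $\mathfrak b>0$ for the hyperbolic case, guarantees that $\bar u$ stays bounded away from $0$ and $\infty$, so all coefficients and their derivatives are bounded on $\mathcal I$), this is routine. The bulk of the work is the a priori estimate: assuming a solution exists on $[0,T]$ with $\sup_{[0,T]}(\|\tilde u\|_2^2+\|\tilde v\|_2^2)\le \varepsilon_0$ for a small $\varepsilon_0$ to be chosen, I would derive a differential inequality of the form $\frac{d}{dt}E(t)+D(t)\le C\varepsilon_0^{1/2}D(t)$, where $E(t)\sim \|\tilde u\|_2^2+\|\tilde v\|_2^2$ (possibly with an equivalent weighted norm using a weight $\omega$ built from $\bar u$, as anticipated by the \texttt{notation} block) and $D(t)\sim \|\tilde u\|_3^2+\|\tilde v\|_3^2$ is the dissipation; absorbing the right side for $\varepsilon_0$ small yields $\frac{d}{dt}E+cD\le 0$, hence (via Poincar\'e, since $\tilde u,\tilde v$ vanish at the endpoints) $E(t)\le E(0)e^{-\gamma t}$ and $\int_0^\infty D\,dt<\infty$. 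Combining this uniform bound with the local theory and a standard continuity argument extends the solution to $[0,\infty)$ and gives the stated regularity and exponential decay.

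For the energy estimates themselves I would proceed order by order. At the $L^2$ level, multiply \eqref{PS-1} by $\tilde u$ and \eqref{PS-2} by a constant multiple of $\tilde v$ (the coupling constant chosen so the linear cross terms $-\chi(\bar u\tilde v)_x\cdot\tilde u$ and $-\mu\tilde u_x\cdot\tilde v$ partially cancel after integration by parts — this is where $\chi\mu>0$ from \eqref{para} enters, providing the hyperbolic stabilization mentioned in the introduction), integrate over $\mathcal I$, and integrate by parts using the homogeneous boundary conditions \eqref{BC}. The linear terms involving $\bar v\tilde u$ and $\bar v\tilde v$, after integration by parts, produce exactly the boundary-derivative term $\tfrac12\bar u_{xx}-2\bar v\bar u_x+(\bar u\bar v)_x$ from \eqref{x2} paired with $\tilde v^2$ (or $\tilde u^2$); by Proposition~\ref{prop} this quantity is non-negative (resp.\ has the right sign after the cancellation), so the corresponding integral has a favorable sign and can be dropped. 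The nonlinear terms — $(\tilde u\tilde v)_x$, $(\tilde v^2)_x$ — are cubic, hence controlled by $\|\tilde u\|_\infty,\|\tilde v\|_\infty\lesssim \varepsilon_0^{1/2}$ times the dissipation via Sobolev embedding $H^1\hookrightarrow L^\infty$ in one dimension. I would then differentiate \eqref{PS} once and twice in $x$, multiply by $\tilde u_x,\tilde v_x$ and $\tilde u_{xx},\tilde v_{xx}$ respectively, and repeat; the new feature at higher order is that derivatives of the steady state hit $\tilde u,\tilde v$ producing terms like $\bar v_x\tilde u_{xx}\tilde u_x$, but these are linear in the perturbation with bounded coefficients and are absorbed either by the good sign terms or, for the worst ones, by a small fraction of the dissipation after a Poincar\'e inequality; one must also use the equations to trade time derivatives for space derivatives when estimating $\|\tilde u_{xx}\|,\|\tilde v_{xx}\|$ and to recover the $H^3$ dissipation in $\tilde v$ (parabolic regularity in \eqref{PS-2}) and in $\tilde u$ (from \eqref{PS-1}).

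The main obstacle I anticipate is the top-order ($H^2$) estimate: differentiating \eqref{PS-1} twice produces the term $-\chi(\bar v\tilde u)_{xxx}=-\chi(\bar v\tilde u_{xxx}+3\bar v_x\tilde u_{xx}+3\bar v_{xx}\tilde u_x+\bar v_{xxx}\tilde u)$, and the term $\bar v_x\tilde u_{xx}$ paired with $\tilde u_{xx}$ after testing gives $-3\chi\int \bar v_x\tilde u_{xx}^2\,dx$, which does \emph{not} vanish and whose sign is not a priori controlled — this is precisely the ``non-vanishing derivatives of the steady state'' difficulty flagged before the proposition. Handling it requires either (i) integrating by parts once more to convert it into a term with $\bar v_{xx}$ and $\tilde u_x\tilde u_{xx}$ which is then split by Cauchy--Schwarz against the genuinely dissipative $\mathsf d\|\tilde u_{xxx}\|^2$, or (ii) choosing the weight $\omega$ in $\|\cdot\|_{2,\omega}$ adapted so that the weighted energy identity produces a perfect sign (a Lyapunov/weighted-energy construction), at the cost of tracking that $\omega$ and $1/\omega$ stay bounded — which is exactly why the monotonicity of $\bar u,\bar v$ recorded in Table~\ref{table-2} is emphasized as ``pivotal.'' A secondary technical point is the compatibility of the second-order estimate with the boundary conditions: since only $\tilde u,\tilde v$ (not their derivatives) vanish at $x=0,1$, the integrations by parts at the $H^2$ level generate boundary terms involving $\tilde u_x,\tilde u_{xx}$ at the endpoints, which must be shown to vanish or be controlled using the equations evaluated on the boundary together with the homogeneous Dirichlet data; I would handle this by the usual trick of estimating $\|\tilde u_{xx}\|,\|\tilde v_{xx}\|$ through the equations (elliptic estimate in $x$ with $\tilde u_t,\tilde v_t$ as data) rather than by direct energy identities that create uncontrolled boundary terms. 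Once the closed inequality $\frac{d}{dt}E+cD\le 0$ is in hand, exponential decay and the continuation to $[0,\infty)$ are immediate.
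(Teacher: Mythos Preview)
Your overall architecture (local theory $+$ a priori estimates $+$ continuation) matches the paper, and you correctly locate the role of Proposition~\ref{prop} and the time-derivative/elliptic trick for the top-order estimate. However, there are two genuine gaps in your plan.

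First, the bootstrap hypothesis is wrong for the theorem as stated. Theorem~\ref{thm} assumes only $\|\tilde u_0\|^2+\|\tilde v_0\|^2\le\delta_0$ small, with $\|(\tilde u_0,\tilde v_0)\|_2$ merely finite (Remark~3.1 calls this the ``partially large perturbation'' regime). Your bootstrap $\sup_{[0,T]}\|(\tilde u,\tilde v)\|_2^2\le\varepsilon_0$ small therefore cannot hold at $t=0$, so the continuation argument never starts. The paper instead works under \eqref{y1}: $\|(\tilde u,\tilde v)\|\le\delta_1$ small \emph{and} $\|(\tilde u_x,\tilde v_x)\|\le\mathsf{M}$ merely bounded. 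The $L^2$ estimate (Lemma~\ref{lem1}) must close using only these, after which the $H^1$ and $H^2$ estimates (Lemmas~\ref{lem2}--\ref{lem4}) propagate \emph{boundedness} via Gr\"onwall, not smallness via absorption. Your inequality $\frac{d}{dt}E+D\le C\varepsilon_0^{1/2}D$ with $E\sim\|(\tilde u,\tilde v)\|_2^2$ is not attainable under the stated hypotheses.

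Second, and relatedly, at the $L^2$ level a \emph{constant} multiplier on \eqref{PS-2} does not work. Testing \eqref{PS-1} against $\mu\tilde u$ leaves the linear cross term $\chi\mu\int\bar u\tilde v\tilde u_x$; testing \eqref{PS-2} against $c\tilde v$ leaves $-c\mu\int\tilde u_x\tilde v$, and the sum $\int(\chi\bar u-c\mu)\tilde u_x\tilde v$ is quadratic in the perturbation with an $O(1)$ coefficient (since $\bar u$ is non-constant), hence not absorbable by smallness. Moreover, with a constant multiplier the quantity $\mathfrak L=\tfrac12\bar u_{xx}-2\bar v\bar u_x+(\bar u\bar v)_x$ never appears; you instead get a lone term $c\varepsilon\int\bar v_x\tilde v^2$ with the \emph{wrong} sign. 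The paper's fix is precisely the weighted multiplier you allude to parenthetically: test \eqref{PS-2} against $\chi\bar u\,\tilde v$ (i.e., $\omega=\chi\bar u$). Then the cross terms cancel \emph{exactly}, and the extra terms from $\bar u_x$ hitting $\tilde v$ combine into $\varepsilon\chi\int\mathfrak L(x)\tilde v^2$, which is non-positive by Proposition~\ref{prop} and can be dropped. This weighted cancellation, together with the good-sign term $-\tfrac{\chi\mu}{2}\int\bar v_x\tilde u^2$ from \eqref{y7} (using the monotonicity of $\bar v$ from Table~\ref{table-2}), is what allows Lemma~\ref{lem1} to close under only $L^2$ smallness. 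For the $H^2$ level, the paper avoids both obstacles you flag by differentiating \eqref{PS} in~$t$ and testing against $(\tilde u_t,\tilde v_t)$ (which vanish on $\partial\mathcal I$), then recovering $\|\tilde u_{xx}\|,\|\tilde v_{xx}\|$ and the $H^3$ dissipation from the equations---essentially the ``elliptic estimate'' you mention, but organized so that no bad boundary terms or uncontrolled-sign coefficients arise.
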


\begin{remark}
In the present work, the smallness assumption is imposed only on the $L^2$ norm of the initial perturbation, while $\|(\tilde{u}_{0x}, \tilde{v}_{0x})\|_1$ is allowed to be large. In this regard, we establish the nonlinear stability of the steady-state solutions under so-called ``partially large perturbations". Moreover, Theorem \ref{thm} does not impose any smallness condition on the steady-state solutions. 
\end{remark}

\begin{remark}
It is worth mentioning that when $\chi<0$, stronger assumptions on the derivatives of $\bar{u}$ and $\bar{v}$ would be required to establish the stability result. The proof for this case is indeed simpler than the one for the case when $\chi>0$.
\end{remark}

As usual, the proof of Theorem \ref{thm} proceeds in three steps. First, we establish short-time well-posedness of the IBVP, which can be shown via the Galerkin and fixed-point methods. Second, we derive uniform-in-time {\it a priori} estimates of the local. Finally, we extend the local solution to global solution by a standard continuation argument. We first present the local well-posedness result.

\begin{lemma}\label{lem}
Under the hypotheses of Theorem \ref{thm}, there exists a finite $T>0$ and a unique solution to the IBVP \eqref{PS}--\eqref{BC}, such that $\tilde u,\tilde{v} \in L^{\infty}\left((0,T);H^2(0,1)\right) \cap L^{2}((0,T);H^3(0,1))$.
\end{lemma}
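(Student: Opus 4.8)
The plan is to construct the local solution by the Galerkin method and then close an $n$-uniform short-time a priori estimate; passage to the limit and a uniqueness argument finish the proof. (Equivalently one could recast \eqref{PS}--\eqref{BC} as a fixed-point equation for the parabolic solution operator and apply the contraction principle, but the Galerkin route yields the $L^2((0,T);H^3)$-integrability directly.) Let $\{e_k\}_{k\ge1}$, $e_k(x)=\sqrt2\sin(k\pi x)$, be the $L^2(\mathcal I)$-orthonormal basis of Dirichlet eigenfunctions of $-\partial_x^2$, and put $V_n=\mathrm{span}\{e_1,\dots,e_n\}$. This basis is convenient because each $e_k$ and all its even-order derivatives vanish at $x=0,1$, so the boundary terms in the higher-order estimates below disappear automatically, and because $\partial_x^2V_n\subseteq V_n$. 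I would seek $(\tilde u^n,\tilde v^n)(\cdot,t)\in V_n$ solving the Galerkin projection of \eqref{PS} with initial data $P_n\tilde u_0,P_n\tilde v_0$ (the $L^2$-projections of the data onto $V_n$); since the nonlinearity is polynomial, Picard's theorem gives a unique solution on a maximal interval $[0,T_n)$. Only the smoothness and boundedness of $(\bar u,\bar v)$ and their derivatives on $\overline{\mathcal I}$ are used here, and these hold for every entry of Table \ref{table-1} once the steady state is nonsingular on $[0,1]$, as the construction of Section 2 guarantees, so the local result does not require the sign constraints of Proposition \ref{prop}.

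The core of the argument is a hierarchy of $n$-uniform a priori estimates on an interval independent of $n$. Testing the two Galerkin equations successively against $\tilde u^n$ and a suitable multiple of $\tilde v^n$, against $-\partial_x^2\tilde u^n$ and $-\partial_x^2\tilde v^n$, and against $\partial_x^4\tilde u^n$ and $\partial_x^4\tilde v^n$ (all legitimate, since $\partial_x^2V_n\subseteq V_n$ whence $\partial_x^4V_n\subseteq V_n$), integrating by parts, and summing, one should arrive at a differential inequality of the form
\[
\frac{d}{dt}\big(\|\tilde u^n\|_2^2+\|\tilde v^n\|_2^2\big)+c\big(\|\tilde u^n\|_3^2+\|\tilde v^n\|_3^2\big)\le C\,P\big(\|\tilde u^n\|_2^2+\|\tilde v^n\|_2^2\big),
\]
with $c>0$ depending only on $\mathsf d,\varepsilon$ and $P$ a fixed polynomial whose coefficients depend only on $\mathsf d,\chi,\varepsilon,\mu$ and on $\|\bar u\|_{W^{3,\infty}(\mathcal I)}+\|\bar v\|_{W^{3,\infty}(\mathcal I)}$. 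The one-dimensional embedding $H^1(\mathcal I)\hookrightarrow L^\infty(\mathcal I)$, Young's inequality, and Poincar\'e's inequality are used throughout to keep the nonlinear and variable-coefficient contributions subordinate to the dissipation; since only a short-time bound is needed, crude estimates suffice, and the delicate cancellation among the linear cross terms (for instance the $\mu\tilde u_x$ term of \eqref{PS-2} against the $\chi(\bar v\tilde u)_x$ term of \eqref{PS-1}) need not be exploited at this stage. A comparison argument for this Riccati-type ODE then produces $T>0$ and $M>0$, depending only on $\|\tilde u_0\|_2^2+\|\tilde v_0\|_2^2$ (note $\|P_n\tilde u_0\|_2^2+\|P_n\tilde v_0\|_2^2\le C(\|\tilde u_0\|_2^2+\|\tilde v_0\|_2^2)$ uniformly in $n$), such that $\|\tilde u^n\|_2^2+\|\tilde v^n\|_2^2\le M$ on $[0,T]$; in particular $T_n\ge T$, and integrating in time yields $\int_0^T(\|\tilde u^n\|_3^2+\|\tilde v^n\|_3^2)\,dt\le C(T,M)$.

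With these bounds in hand, $(\tilde u^n,\tilde v^n)$ lies in a bounded subset of $L^\infty(0,T;H^2)\cap L^2(0,T;H^3)$, while the equations bound $(\partial_t\tilde u^n,\partial_t\tilde v^n)$ in $L^2(0,T;L^2)$. A compactness argument of Aubin--Lions type extracts a subsequence converging strongly in $C([0,T];H^{2-\eta})$ for any $\eta\in(0,1)$, hence in $C([0,T];C^1(\overline{\mathcal I}))$, and weak-$*$ in $L^\infty(0,T;H^2)$ and weakly in $L^2(0,T;H^3)$; this is amply sufficient to pass to the limit in the quadratic terms, so the limit $(\tilde u,\tilde v)$ solves \eqref{PS}--\eqref{BC} with the regularity asserted in Lemma \ref{lem}. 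For uniqueness I would subtract the equations for two such solutions, test the difference $(\delta u,\delta v)$ against itself in $L^2$, use the $H^2\hookrightarrow C^1(\overline{\mathcal I})$ bounds together with integration by parts (to move a derivative off the difference onto the smooth steady state or onto one of the two solutions) to reduce every nonlinear term to a constant multiple of $\|(\delta u,\delta v)\|^2$ up to terms absorbed by the $\mathsf d\|\delta u_x\|^2+\varepsilon\|\delta v_x\|^2$ dissipation, and conclude by Gronwall's inequality.

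The step I expect to be the main obstacle is closing the top-order ($H^2$) estimate. Testing the Galerkin equations against $\partial_x^4\tilde u^n,\partial_x^4\tilde v^n$ and integrating by parts generates, from the quadratic fluxes, cubic terms such as $\int_{\mathcal I}\tilde u^n\,\partial_x^2\tilde v^n\,\partial_x^3\tilde u^n\,dx$ and $\int_{\mathcal I}\tilde v^n\,\partial_x^2\tilde v^n\,\partial_x^3\tilde v^n\,dx$, in which three derivatives land on a single factor. Keeping these subordinate to the $H^3$-dissipation $c(\|\tilde u^n\|_3^2+\|\tilde v^n\|_3^2)$ is the delicate point; it should be achieved by a judicious integration by parts (to avoid placing two derivatives on the already highest-order factor) together with the one-dimensional embeddings $H^1(\mathcal I)\hookrightarrow L^\infty(\mathcal I)$ and $H^2(\mathcal I)\hookrightarrow C^1(\overline{\mathcal I})$ and Young's inequality, after which only a polynomial in $\|\tilde u^n\|_2^2+\|\tilde v^n\|_2^2$ survives on the right, which is precisely the structure the Riccati comparison requires. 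The remaining ingredients, namely the $L^2$ and $H^1$ estimates, the compactness passage to the limit, and the uniqueness estimate, are routine.
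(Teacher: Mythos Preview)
The paper does not prove this lemma: it simply remarks that local well-posedness ``can be shown via the Galerkin and fixed-point methods,'' and your Galerkin scheme with $n$-uniform energy estimates, Aubin--Lions compactness, and a Gronwall uniqueness argument is exactly what is meant and is correct in outline. One caveat at the top order: testing against $\partial_x^4\tilde u^n,\partial_x^4\tilde v^n$ leaves boundary contributions not from the cubic fluxes you flag (those vanish, since $(\tilde u^n\tilde v^n)_x|_{0,1}=0$) but from the \emph{linear} variable-coefficient and coupling terms, e.g.\ $[\bar u\,\tilde v^n_x\,\partial_x^3\tilde u^n]_0^1$ and $-\mu[\tilde u^n_x\,\partial_x^3\tilde v^n]_0^1$, which are generically nonzero (odd-order derivatives of the sine basis do not vanish at $0,1$) and cannot be absorbed into the $H^3$ dissipation; the clean fix is to obtain the $H^2$ bound by differentiating in time, estimating $\|\tilde u^n_t\|,\|\tilde v^n_t\|$, and then recovering $\|\tilde u^n_{xx}\|,\|\tilde v^n_{xx}\|$ from the equations, mirroring what the paper does for the global estimate in Lemma~\ref{lem4}.
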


The bulk of this section is devoted to deriving the uniform-in-time {\it a priori} estimates of the local solution obtained in Lemma \ref{lem}. Hereafter we assume that $\|\tilde{u}(t)\|+\|\tilde{v}(t)\|$ remains uniformly small throughout the lifespan of the local solution. Under this hypothesis, the {\it a priori} bounds in the relevant Sobolev spaces are shown to be uniform in time. All subsequent energy estimates are derived under the following {\it a priori} assumption:
\begin{align}\label{y1}
\sup_{t\in [0,T]} \left(\|\tilde{u}(t)\|+\|\tilde{v}(t)\|\right) \leqslant \delta_1,\qquad \sup_{t\in [0,T]} \left(\|\tilde{u}_x(t)\|+\|\tilde{v}_x(t)\|\right) \leqslant \mathsf{M},
\end{align}
where $\delta_1>0$ is a small constant, $\mathsf{M}>0$ is a finite constant, and $T>0$ denotes any finite time within the lifespan of the local solution. The proof of the {\it a priori} estimates is structured into five steps, the first four deal with the regularity and uniform-in-time estimates, while the last one addresses the decay of the perturbation. We begin with the zeroth-order estimate of the perturbation.

\begin{lemma}\label{lem1} 
Under the hypotheses of Theorem \ref{thm}, there exists a constant $C>0$, such that
$$
\|\tilde{u}(t)\|^2+\|\tilde{v}(t)\|_{\omega}^2+\int_0^t \left(\|\tilde{u}_x(\tau)\|^2+\|\tilde{v}_x(\tau)\|_{\omega}^2\right)\mathrm{d}\tau \leqslant C\left(\|\tilde{u}_0\|^2+\|\tilde{v}_0\|_{\omega}^2\right).
$$
\end{lemma}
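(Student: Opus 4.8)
\textbf{Proof plan for Lemma \ref{lem1} (zeroth-order estimate).}

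The plan is to run a weighted $L^2$ energy estimate on the perturbation system \eqref{PS}. For the $\tilde u$-equation I would simply multiply \eqref{PS-1} by $\tilde u$ and integrate over $\mathcal I=(0,1)$; since $\tilde u=0$ on $\partial\mathcal I$, integration by parts gives $\frac12\frac{d}{dt}\|\tilde u\|^2 + \mathsf d\|\tilde u_x\|^2 = -\chi\int(\tilde u\tilde v+\bar u\tilde v+\bar v\tilde u)_x\,\tilde u\,dx = \chi\int(\tilde u\tilde v+\bar u\tilde v+\bar v\tilde u)\,\tilde u_x\,dx$, with no boundary contribution. For the $\tilde v$-equation, the key structural point is that \eqref{PS-2} does not diffuse $\tilde u$, so to cancel the cross terms $-\mu\tilde u_x$ against $-\chi\bar u\tilde v$ I need a weight: I would test \eqref{PS-2} against $\omega\tilde v$ for a weight $\omega=\omega(x)$ to be chosen, and choose $\omega$ so that the worst cross terms coming from $\int \omega\tilde v(-\mu\tilde u_x)$ and from the $\tilde u$-estimate balance. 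This is the standard Kang–Zhang–type weighted energy method for the transformed Keller–Segel system; the natural choice is $\omega$ proportional to $\bar u$ (or a power of it), since the coefficient structure of \eqref{PS} is tied to $\bar u$ through $\bar v=\frac{\mathsf d}{\chi}\frac{\bar u_x}{\bar u}$. Because $\bar u$ is one of the explicit functions of Table \ref{table-1} and, by the constraints of Section 2, is continuous, bounded, bounded away from zero, and strictly monotone on $[0,1]$, the weight $\omega$ is comparable to a positive constant, so $\|\cdot\|_\omega$ and $\|\cdot\|$ are equivalent norms and all weight derivatives $\omega_x,\omega_{xx}$ are bounded.

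Next I would add the two estimates (the second scaled by an appropriate constant $\eta>0$) and collect terms. The linear-in-perturbation terms are: the good dissipation $\mathsf d\|\tilde u_x\|^2 + \varepsilon\|\sqrt\omega\,\tilde v_x\|^2$; a zeroth-order term in $\tilde v$ of the form $\int\big(\tfrac{\bar u_{xx}}{2}-2\bar v\bar u_x+(\bar u\bar v)_x\big)\,(\text{const})\,\tilde v^2\,dx$ — this is exactly where Proposition \ref{prop} enters: its conclusion that the bracket in \eqref{x2} is non-negative (after the appropriate sign bookkeeping) lets me \emph{discard} this term rather than absorb it, as the text preceding the proposition explains; and cross terms $\int(\cdots)\tilde u\tilde v\,dx$, $\int(\cdots)\tilde u_x\tilde v\,dx$ with bounded coefficients (built from $\bar u,\bar v,\bar u_x,\bar v_x,\omega,\omega_x$), which I control by Cauchy–Schwarz and Young's inequality, sending the $\tilde u_x$ and $\sqrt\omega\,\tilde v_x$ pieces into the dissipation with small constants and leaving $C(\|\tilde u\|^2+\|\tilde v\|_\omega^2)$ on the right. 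Choosing $\eta$ and the Young constants appropriately, the cross-diffusion structure closes precisely because of the weight choice. The nonlinear terms $\int\tilde u\tilde v\,\tilde u_x\,dx$ and $\int(\tilde v^2)_x\,\omega\tilde v\,dx = -\int\tilde v^2(\omega\tilde v)_x\,dx$ are cubic; using the a priori smallness \eqref{y1} together with the one-dimensional Sobolev/Gagliardo–Nirenberg inequality $\|\tilde v\|_\infty^2 \le C\|\tilde v\|\,\|\tilde v_x\| + C\|\tilde v\|^2 \le C\delta_1(\|\tilde v_x\|+\|\tilde v\|)$ (and likewise for $\tilde u$, which vanishes on the boundary), these are bounded by $C\delta_1(\|\tilde u_x\|^2+\|\tilde v_x\|_\omega^2) + C\delta_1(\|\tilde u\|^2+\|\tilde v\|_\omega^2)$, so for $\delta_1$ small the first piece is again absorbed by the dissipation.

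At this point I would arrive at a differential inequality of the form $\frac{d}{dt}\big(\|\tilde u\|^2+\|\tilde v\|_\omega^2\big) + c_0\big(\|\tilde u_x\|^2+\|\tilde v_x\|_\omega^2\big) \le C\big(\|\tilde u\|^2+\|\tilde v\|_\omega^2\big)$ for constants $c_0,C>0$. Integrating in time from $0$ to $t$ yields the stated inequality with the time integral of the dissipation on the left; a cleaner way to present it, avoiding any Grönwall loss of exponential type here (the genuine exponential decay comes later, after higher-order estimates), is to note that on the interval $\mathcal I$ the Poincaré inequality $\|\tilde u\|\le C_P\|\tilde u_x\|$ holds since $\tilde u$ vanishes on $\partial\mathcal I$, and a weighted Poincaré inequality $\|\tilde v\|_\omega\le C_P'\|\tilde v_x\|_\omega$ holds since $\tilde v$ vanishes on $\partial\mathcal I$ and $\omega$ is comparable to a constant; choosing $c_0$ large relative to $C\max(C_P^2,{C_P'}^2)$ — which can be arranged by the weight scaling and by taking $\delta_1$ small — absorbs the right-hand side into the left, giving $\frac{d}{dt}(\|\tilde u\|^2+\|\tilde v\|_\omega^2)\le 0$ plus a coercive dissipation term, and integrating gives exactly the claimed bound with constant $C=1$ (or a harmless constant). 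The main obstacle I anticipate is the correct choice of the weight $\omega$ and the simultaneous tuning of $\eta$ and the Young constants so that the indefinite cross terms $\int(\cdots)\tilde u_x\tilde v\,dx$ arising from the non-vanishing derivatives $\bar u_x,\bar v_x$ are fully absorbed; this is exactly the ``additional terms in the energy estimates'' flagged in the text just before Proposition \ref{prop}, and it is what forces both the weighted formulation and the sign condition of Proposition \ref{prop}.
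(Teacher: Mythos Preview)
Your overall architecture is the same as the paper's: test \eqref{PS-1} against $\mu\tilde u$, test \eqref{PS-2} against $\chi\bar u\,\tilde v$ (so $\omega=\chi\bar u$), let the term $\chi\mu\int_0^1\bar u\,\tilde v\,\tilde u_x\,dx$ cancel exactly against $-\chi\mu\int_0^1\bar u\,\tilde u_x\,\tilde v\,dx$, use Proposition~\ref{prop} to drop the $\int\mathfrak L(x)\tilde v^2\,dx$ term, and absorb the cubic pieces using the a~priori smallness \eqref{y1}. So far so good.

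The gap is in what you call ``cross terms $\int(\cdots)\tilde u\tilde v\,dx$, $\int(\cdots)\tilde u_x\tilde v\,dx$ with bounded coefficients''. After the exact cancellation there \emph{are no} mixed $\tilde u\tilde v$ terms left at all; what remains on the linear level is the pure-$\tilde u$ term
\[
\chi\mu\int_0^1 \bar v\,\tilde u\,\tilde u_x\,dx,
\]
which you do not single out. Your plan to bound it by Young's inequality plus Poincar\'e leaves $C\|\bar v\|_\infty^2\|\tilde u\|^2\le C\|\bar v\|_\infty^2\pi^{-2}\|\tilde u_x\|^2$ on the right, and this can be absorbed into $\mathsf d\mu\|\tilde u_x\|^2$ only if $\|\bar v\|_\infty$ is small relative to $\mathsf d,\chi$. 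That is a smallness condition on the steady state, which the theorem explicitly avoids (see Remark~3.1). Your suggested fixes do not help: scaling $\omega$ rescales both sides equally, and taking $\delta_1$ small touches only the cubic terms, not this linear one.

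The paper's resolution is to integrate by parts once more and use the monotonicity recorded in Table~\ref{table-2}:
\[
\chi\mu\int_0^1 \bar v\,\tilde u\,\tilde u_x\,dx
=-\frac{\chi\mu}{2}\int_0^1 \bar v_x\,\tilde u^2\,dx\le 0,
\]
since $\bar v$ is strictly increasing for every steady state in the table. With this sign argument (and Proposition~\ref{prop} for the $\tilde v$ side) the right-hand side collapses to the small cubic terms only, and one obtains directly
\[
\frac{d}{dt}\big(\mu\|\tilde u\|^2+\|\tilde v\|_\omega^2\big)+\mathsf d\mu\|\tilde u_x\|^2+\varepsilon\|\tilde v_x\|_\omega^2\le 0,
\]
with no residual $C(\|\tilde u\|^2+\|\tilde v\|_\omega^2)$ to absorb. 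This is the step your plan is missing.
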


\begin{proof} \textbf{Step 1.}
Taking $L^2$ inner product of \eqref{PS-1} with $\mu\tilde{u}$ and integrating by parts, we derive
\begin{align}\label{y2}
\frac{\mu}{2} \frac{\mathrm d}{\mathrm dt} \|\tilde{u}\|^2 + \mathsf{d}\mu \|\tilde{u}_x\|^2 = \chi\mu \int_0^1 \tilde{u} \tilde{u}_x \tilde{v} \, \mathrm{d}x + \chi\mu \int_0^1 \bar{u} \tilde{v} \tilde{u}_x \, \mathrm{d}x + \chi\mu \int_0^1 \bar{v} \tilde{u} \tilde{u}_x \, \mathrm{d}x.
\end{align}
For the first integral on the right-hand side of \eqref{y2}, by Cauchy's inequality, we have
\begin{align}\label{y3}
\bigg|\chi\mu \int_0^1 \tilde{u} \tilde{u}_x \tilde{v} \, \mathrm{d}x \bigg| \leqslant \chi\mu \|\tilde{u}\|_{\infty} \int_0^1 |\tilde{u}_x \tilde{v}| \, \mathrm{d}x \leqslant \frac{\chi\mu \|\tilde{u}\|_{\infty}}{2} \left( \|\tilde{u}_x\|^2 + \|\tilde{v}\|^2 \right).
\end{align}
Since $\tilde{u}$ vanishes on the boundary, by interpolation and \eqref{y1}, we obtain
\begin{align}\label{y4}
\|\tilde{u}\|_{\infty} \leqslant \sqrt{2} \|\tilde{u}\|^{1/2} \|\tilde{u}_x\|^{1/2} \leqslant \sqrt{2\delta_1 \mathsf{M}}.
\end{align}
Let $\omega=\chi\bar u$. From the monotonicity of the steady-state solution $\bar{u}$ and positivity of the boundary values, we know that there exist constants $\overline{\omega}>\underline{\omega} >0$, such that the weight function $\omega$ satisfies
\begin{align}\label{y5}
\underline{\omega} \leqslant \omega \leqslant \overline{\omega}.
\end{align}
Combining \eqref{y3}, \eqref{y4}, \eqref{y5}, and Poincar\'{e}'s inequality leads to
\begin{align}\label{y6}
\bigg|\chi\mu \int_0^1 \tilde{u} \tilde{u}_x \tilde{v} \, \mathrm{d}x \bigg| \leqslant \frac{\chi\mu \sqrt{\delta_1 \mathsf{M}}}{\sqrt{2}} \left(\|\tilde{u}_x\|^2 + \frac{1}{\pi^2\underline{\omega}} \|\tilde{v}_x\|^2_{\omega} \right).
\end{align}
For the third integral on the right-hand side of \eqref{y2}, integration-by-parts gives
\begin{align}\label{y7}
\chi\mu \int_0^1 \bar{v} \tilde{u} \tilde{u}_x \, \mathrm{d}x = -\frac{\chi\mu}{2} \int_0^1 \bar{v}_x \tilde{u}^2 \, \mathrm{d}x.
\end{align}
Since $\chi\mu > 0$, and $\bar{v}$ is increasing (c.f. Table \ref{table-2}), it follows that the above integral is negative. Combining \eqref{y2}, \eqref{y6}, and \eqref{y7}, we obtain
\begin{align}\label{y8}
\frac{\mu}{2} \frac{\mathrm d}{\mathrm dt} \|\tilde{u}\|^2 + \mathsf{d}\mu \|\tilde{u}_x\|^2 \leqslant \frac{\chi\mu \sqrt{\delta_1 \mathsf{M}}}{\sqrt{2}} \left(\|\tilde{u}_x\|^2 + \frac{1}{\pi^2\underline{\omega}} \|\tilde{v}_x\|^2_{\omega} \right) + \chi\mu \int_0^1 \bar{u} \tilde{v} \tilde{u}_x \, \mathrm{d}x.
\end{align}

\textbf{Step 2.}
Taking $L^2$ inner product of \eqref{PS-2} with $\chi\bar{u} \tilde{v}$ and integrating by parts yield
\begin{align}\label{y9}
\frac{1}{2} \frac{\mathrm d}{\mathrm dt}\|\tilde{v}\|_{\omega}^2 + \varepsilon\|\tilde{v}_x\|_{\omega}^2 = -\varepsilon \chi \int_0^1 \left[\tilde{v}_x \tilde{v} \bar{u}_x + (\tilde{v}^2 + 2\bar{v} \tilde{v}) (\bar{u} \tilde{v})_x\right] \mathrm{d}x - \chi \mu \int_0^1 \bar{u} \tilde{u}_x \tilde{v} \, \mathrm{d}x.
\end{align}
Applying integration-by-parts, we rewrite the first integral on the right-hand side of \eqref{y9} as
\begin{align}\label{y10}
 -\frac{2\varepsilon\chi}{3} \int_0^1 \bar{u}_x \tilde{v}^3\, \mathrm{d}x + \varepsilon\chi\int_0^1 \left[ \frac{\bar{u}_{xx}}{2} - 2\bar{v} \bar{u}_x + (\bar{u}\bar{v})_x\right] \tilde{v}^2 \, \mathrm{d}x.
\end{align}
By the construction of the steady state, we know that there exist constants $\overline{\mathsf{u}} > \underline{\mathsf{u}}>0$, such that 
\begin{align}\label{y11}
\underline{\mathsf{u}} \leqslant \|\bar{u}_x\|_\infty \leqslant \overline{\mathsf{u}}.
\end{align}
Since $\tilde{v}$ vanishes on the boundary, similar to \eqref{y4}, we have
\begin{align}\label{y12}
\|\tilde{v}\|_{\infty} \leqslant \sqrt{2} \|\tilde{v}\|^{1/2} \|\tilde{v}_x\|^{1/2} \leqslant \sqrt{2\delta_1 \mathsf{M}}.
\end{align}
Combining \eqref{y11}, \eqref{y12}, and applying Poincar\'e's inequality, we estimate the first integral on the right-hand side of \eqref{y10} as 
\begin{align}\label{y13}
\bigg|\frac{2\varepsilon\chi}{3} \int_0^1 \bar{u}_x \tilde{v}^3 \, \mathrm{d}x\bigg|  
\leqslant \frac{2\varepsilon \chi \overline{\mathsf{u}}}{3} \|\tilde{v}\|_{\infty} \|\tilde{v}\|^2 \leqslant \frac{2\varepsilon \chi \overline{\mathsf{u}} \sqrt{2\delta_1\mathsf{M}}}{3\pi^2 \underline{\omega}} \|\tilde{v}_x\|_{\omega}^2.
\end{align}
The combination of \eqref{y9}, \eqref{y10}, and \eqref{y13} gives 
\begin{align}\label{y14}
\frac{1}{2} \frac{\mathrm d}{\mathrm dt}\|\tilde{v}\|_{\omega}^2 + \varepsilon\|\tilde{v}_x\|_{\omega}^2 \leqslant \frac{2\varepsilon \chi \overline{\mathsf{u}} \sqrt{2\delta_1\mathsf{M}}}{3\pi^2 \underline{\omega}} \|\tilde{v}_x\|_{\omega}^2 + \varepsilon\chi\int_0^1 \mathfrak{L}(x) \tilde{v}^2 \, \mathrm{d}x - \chi \mu \int_0^1 \bar{u} \tilde{u}_x \tilde{v} \, \mathrm{d}x,
\end{align}
where the function $\mathfrak{L}$ is defined by 
\begin{align*}
\mathfrak{L}(x) = \frac{\bar{u}_{xx}}{2} - 2\bar{v} \bar{u}_x + (\bar{u}\bar{v})_x,\quad x\in [0,1].
\end{align*}
From Proposition \ref{prop}, we know that $\mathfrak{L} \leqslant 0$. Hence, we update \eqref{y14} as 
\begin{align}\label{y15}
\frac{1}{2} \frac{\mathrm d}{\mathrm dt}\|\tilde{v}\|_{\omega}^2 + \varepsilon\|\tilde{v}_x\|_{\omega}^2 \leqslant \frac{2\varepsilon \chi \overline{\mathsf{u}} \sqrt{2\delta_1\mathsf{M}}}{3\pi^2 \underline{\omega}} \|\tilde{v}_x\|_{\omega}^2 - \chi \mu \int_0^1 \bar{u} \tilde{u}_x \tilde{v} \, \mathrm{d}x.
\end{align}

\textbf{Step 3.}
Adding \eqref{y8} and \eqref{y15} together, we obtain
\begin{align*}
\frac{\mathrm d}{\mathrm dt} \left( \frac{\mu}{2} \|\tilde{u}\|^2 + \frac{1}{2} \|\tilde{v}\|_{\omega}^2 \right) + \mathsf{d}\mu \|\tilde{u}_x\|^2 + \varepsilon \|\tilde{v}_x\|_{\omega}^2 \leqslant \mathsf{c_1}\|\tilde{u}_x\|^2 + \mathsf{c}_2 \|\tilde{v}_x\|^2, 
\end{align*}
where the constants $\mathsf{c}_1$ and $\mathsf{c}_2$ are defined by 
\begin{align*}
\mathsf{c}_1 = \frac{\chi\mu \sqrt{\delta_1 \mathsf{M}}}{\sqrt{2}},\qquad \mathsf{c}_2 = \frac{\chi\mu \sqrt{\delta_1 \mathsf{M}}}{\sqrt{2}\pi^2\underline{\omega}} + \frac{2\varepsilon \chi \overline{\mathsf{u}} \sqrt{2\delta_1\mathsf{M}}}{3\pi^2 \underline{\omega}}.
\end{align*}
When $\delta_1$ is sufficiently small such that $\mathsf{c}_1 \leqslant \frac{\mathsf{d}\mu}{2}$ and $\mathsf{c}_2 \leqslant \frac{\varepsilon}{2}$, we have 
\begin{align}\label{y16}
\frac{\mathrm d}{\mathrm dt} \left(\mu \|\tilde{u}\|^2 + \|\tilde{v}\|_{\omega}^2 \right) + \mathsf{d}\mu \|\tilde{u}_x\|^2 + \varepsilon \|\tilde{v}_x\|_{\omega}^2 \leqslant 0.
\end{align}
Integrating \eqref{y16} with respect to $t$ completes the proof.
\end{proof}

\begin{lemma}\label{lem2}
Under the hypotheses of Theorem \ref{thm}, there exists a constant $C>0$, such that
$$
\|\tilde{u}_x(t)\|^2+\|\tilde{v}_x(t)\|^2+\int_0^t \left(\|\tilde{u}_{xx}(\tau)\|^2+\|\tilde{v}_{xx}(\tau)\|^2\right)\mathrm{d}\tau \leqslant C\left(\|\tilde{u}_0\|_{1}^2+\|\tilde{v}_0\|_1^2\right).
$$
\end{lemma}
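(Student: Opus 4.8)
The plan is to obtain the first-order estimate by pairing the perturbation equations \eqref{PS-1} and \eqref{PS-2} with $-\tilde u_{xx}$ and $-\tilde v_{xx}$ in $L^2(\mathcal{I})$ (possibly after multiplying by fixed positive constants), rather than differentiating the equations in $x$ and testing against $\tilde u_x,\tilde v_x$. The reason for this choice is boundary control: by \eqref{BC}, only $\tilde u$ and $\tilde v$, not their spatial derivatives, vanish on $\partial\mathcal{I}$, so testing the $x$-differentiated equations against $\tilde u_x,\tilde v_x$ would produce boundary terms like $[\tilde u_{xx}\tilde u_x]_0^1$ that are awkward to control, whereas with the present pairing every boundary term arising from integration by parts carries a factor $\tilde u_t$ or $\tilde v_t$, which vanishes on $\partial\mathcal{I}$ upon differentiating the time-independent conditions \eqref{BC} in $t$. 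Integration by parts then yields
\begin{align*}
\frac12\frac{\mathrm d}{\mathrm dt}\|\tilde u_x\|^2 + \mathsf{d}\|\tilde u_{xx}\|^2 &= \chi\int_0^1 (\tilde u\tilde v + \bar u\tilde v + \bar v\tilde u)_x\,\tilde u_{xx}\,\mathrm dx, \\
\frac12\frac{\mathrm d}{\mathrm dt}\|\tilde v_x\|^2 + \varepsilon\|\tilde v_{xx}\|^2 &= -\int_0^1 \bigl[\varepsilon(\tilde v^2 + 2\bar v\tilde v)_x - \mu\tilde u_x\bigr]\tilde v_{xx}\,\mathrm dx .
\end{align*}

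Next I would estimate every term on the right by Cauchy--Schwarz and Young's inequality, \emph{without} any further integration by parts (so that no new boundary term appears), using three ingredients: the Sobolev bounds $\|\tilde u\|_\infty,\|\tilde v\|_\infty\leqslant\sqrt{2\delta_1\mathsf{M}}$ from \eqref{y4} and \eqref{y12}; the a priori bounds $\|\tilde u_x\|,\|\tilde v_x\|\leqslant\mathsf{M}$ and $\|\tilde u\|,\|\tilde v\|\leqslant\delta_1$ from \eqref{y1}; and the fact that the steady states constructed in Section~2 are bounded together with their first and second derivatives on $[0,1]$ (as reflected in \eqref{y5} and \eqref{y11}). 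After expanding the derivatives, the genuinely nonlinear (cubic) contributions, such as $\chi\int\tilde u\,\tilde v_x\,\tilde u_{xx}$ and $\varepsilon\int\tilde v\,\tilde v_x\,\tilde v_{xx}$, are bounded by a small fraction of $\mathsf{d}\|\tilde u_{xx}\|^2+\varepsilon\|\tilde v_{xx}\|^2$ plus $C\delta_1\mathsf{M}(\|\tilde u_x\|^2+\|\tilde v_x\|^2)$, while the steady-state coupling terms, such as $\chi\int\bar u_x\tilde v\,\tilde u_{xx}$, $\chi\int\bar u\,\tilde v_x\,\tilde u_{xx}$, $\chi\int\bar v\,\tilde u_x\,\tilde u_{xx}$, $\varepsilon\int\bar v\,\tilde v_x\,\tilde v_{xx}$, and the coupling term $\mu\int\tilde u_x\tilde v_{xx}$, are bounded by a small fraction of $\mathsf{d}\|\tilde u_{xx}\|^2+\varepsilon\|\tilde v_{xx}\|^2$ plus $C(\|\tilde u\|^2+\|\tilde v\|^2+\|\tilde u_x\|^2+\|\tilde v_x\|^2)$.

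Adding the two inequalities and absorbing all the $\|\tilde u_{xx}\|^2$ and $\|\tilde v_{xx}\|^2$ contributions into the left-hand side, which requires no smallness but only that the Young coefficients are chosen as fixed fractions of $\mathsf{d}$ and $\varepsilon$, leads to
\begin{align*}
\frac{\mathrm d}{\mathrm dt}\bigl(\|\tilde u_x\|^2+\|\tilde v_x\|^2\bigr) + c\bigl(\|\tilde u_{xx}\|^2+\|\tilde v_{xx}\|^2\bigr) \leqslant C\bigl(\|\tilde u\|^2+\|\tilde v\|^2+\|\tilde u_x\|^2+\|\tilde v_x\|^2\bigr)
\end{align*}
for some $c>0$. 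Integrating in $t$ and invoking Lemma \ref{lem1}, which controls $\int_0^t\bigl(\|\tilde u_x\|^2+\|\tilde v_x\|_\omega^2\bigr)\mathrm d\tau$, hence, by \eqref{y5} and Poincar\'e's inequality, also $\int_0^t\bigl(\|\tilde u\|^2+\|\tilde v\|^2+\|\tilde u_x\|^2+\|\tilde v_x\|^2\bigr)\mathrm d\tau$, together with $\|\tilde u_{0x}\|^2+\|\tilde v_{0x}\|^2\leqslant\|\tilde u_0\|_1^2+\|\tilde v_0\|_1^2$, yields the claimed bound.

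The step I expect to be the main obstacle is the treatment of the two ``mixed top-order'' terms $\chi\int\bar u\,\tilde v_x\,\tilde u_{xx}$ and $\mu\int\tilde u_x\,\tilde v_{xx}$: they are of the same size as the dissipation, with no small factor available, they couple the two equations, and attempting to cancel them against each other, as was done at the zeroth-order level in Lemma \ref{lem1}, reintroduces an uncontrolled boundary term proportional to $[\bar u\,\tilde u_x\tilde v_x]_0^1$. The resolution is \emph{not} to seek such cancellation here: one simply Young-splits each term, absorbing the second-derivative factor into the left-hand side and leaving behind $C\|\tilde v_x\|^2$, respectively $C\|\tilde u_x\|^2$, quantities whose time integral is already supplied by the zeroth-order estimate of Lemma \ref{lem1}. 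The remaining work is bookkeeping: checking that each boundary term generated by the integrations by parts vanishes, which is exactly what the choice of multipliers $-\tilde u_{xx},-\tilde v_{xx}$ guarantees.
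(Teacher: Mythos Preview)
Your proposal is correct and follows the paper's approach: test \eqref{PS-1}, \eqref{PS-2} against $-\tilde u_{xx}$, $-\tilde v_{xx}$, bound the right-hand side by Cauchy--Young (the paper writes this as \eqref{y17} and \eqref{y19}), and close with Lemma~\ref{lem1}. The one minor difference is that the paper estimates the cubic terms via $\|\tilde u\|_\infty^2\leqslant C\|\tilde u_x\|^2$, obtaining \eqref{z3} with bilinear right-hand side $C\bigl(\|\tilde u_x\|^2\|\tilde v_x\|^2+\|\tilde v_x\|^4+\|\tilde u_x\|^2+\|\tilde v_x\|^2\bigr)$ and then applies Gr\"onwall (with integrable coefficient supplied by Lemma~\ref{lem1}), whereas you linearize the cubic terms immediately using \eqref{y4}, \eqref{y12} and integrate directly; both closures are equivalent here.
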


\begin{proof} Multiplying \eqref{PS-1} by $-\tilde{u}_{xx}$ and integrating with respect to $x$, we obtain
\begin{align}\label{y17}
\frac{\mathrm d}{\mathrm dt} \|\tilde{u}_x\|^2 + \mathsf{d} \|\tilde{u}_{xx}\|^2 \leqslant \frac{\chi^2}{\mathsf{d}} \| (\tilde{u}\tilde{v} + \bar{u}\tilde{v} + \bar{v}\tilde{u})_x \|^2.
\end{align}
Since $\tilde{u}$ and $\tilde{v}$ satisfy Poincar\'e's inequality, by Sobolev embedding, we have
\begin{align*}
\|\tilde{u}\|_\infty^2 \leqslant C \|\tilde{u}_x\|^2,\qquad \|\tilde{v}\|_\infty^2 \leqslant C \|\tilde{v}_x\|^2,
\end{align*}
by which we derive 
\begin{align}\label{z1}
\|(\tilde{u}\tilde{v})_x\|^2 \leqslant C \|\tilde{u}_x\|^2\|\tilde{v}_x\|^2.
\end{align}
Using the smoothness of the steady-state solution, we can show that 
\begin{align}\label{z2}
\|(\bar{u}\tilde{v} + \bar{v}\tilde{u})_x \|^2 \leqslant C\left(\|\tilde{u}_x\|^2+\|\tilde{v}_x\|^2\right).
\end{align}
Then we update \eqref{y17} as 
\begin{align}\label{y18}
\frac{\mathrm d}{\mathrm dt} \|\tilde{u}_x\|^2 + \mathsf{d} \|\tilde{u}_{xx}\|^2 \leqslant C\left(\|\tilde{u}_x\|^2\|\tilde{v}_x\|^2+\|\tilde{u}_x\|^2+\|\tilde{v}_x\|^2\right).
\end{align}
Multiplying \eqref{PS-2} by $-\tilde{v}_{xx}$ and integrating with respect to $x$, we obtain
\begin{align}\label{y19}
\frac{\mathrm d}{\mathrm dt} \|\tilde{v}_x\|^2 + \varepsilon\|\tilde{v}_{xx}\|^2 
\leqslant  C\left(\|\tilde{u}_x\|^2 + \|\tilde{v}\tilde{v}_x\|^2 + \|(\bar{v}\tilde{v})_x\|^2\right).
\end{align}
Similarly, we have
\begin{align*}
\|\tilde{v}\tilde{v}_x\|^2 + \|(\bar{v}\tilde{v})_x\|^2 \leqslant C\left(\|\tilde{v}_x\|^2\|\tilde{v}_x\|^2 + \|\tilde{v}_x\|^2\right).
\end{align*} 
which updates \eqref{y19} as 
\begin{align}\label{y20}
\frac{\mathrm d}{\mathrm dt} \|\tilde{v}_x\|^2 + \varepsilon\|\tilde{v}_{xx}\|^2 
\leqslant  C\left(\|\tilde{v}_x\|^2\|\tilde{v}_x\|^2 + \|\tilde{u}_x\|^2 + \|\tilde{v}_x\|^2\right).
\end{align}
Taking the sum of \eqref{y18} and \eqref{y20}, we obtain
\begin{align}\label{z3}
\frac{\mathrm d}{\mathrm dt}\left(\|\tilde{u}_x\|^2 + \|\tilde{v}_x\|^2\right) + \mathsf{d}\|\tilde{u}_{xx}\|^2 + \varepsilon \|\tilde{v}_{xx}\|^2
\leqslant C\left( \|\tilde{u}_x\|^2\|\tilde{v}_x\|^2 + \|\tilde{v}_x\|^4  + \|\tilde{u}_x\|^2 + \|\tilde{v}_x\|^2 \right).
\end{align}
An application of Gr\"onwall's inequality, together with Lemma \ref{lem1}, completes the proof.
\end{proof}

As a by-product of Lemmas \ref{lem1} and \ref{lem2}, we have

\begin{lemma}\label{lem3}
Under the hypotheses of Theorem \ref{thm}, there exists a constant $C>0$, such that
$$
\int_0^t \left(\|\tilde u_t(\tau)\|^2 + \|\tilde v_t(\tau)\|^2\right) \mathrm{d}\tau \leqslant C.
$$
\end{lemma}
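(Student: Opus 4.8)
The plan is to read $\tilde{u}_t$ and $\tilde{v}_t$ directly off the perturbation equations \eqref{PS-1}--\eqref{PS-2} and to bound their $L^2$ norms by quantities already controlled in Lemmas \ref{lem1} and \ref{lem2}. From \eqref{PS-1} we have $\tilde{u}_t = \mathsf{d}\tilde{u}_{xx} - \chi (\tilde{u}\tilde{v} + \bar{u}\tilde{v} + \bar{v}\tilde{u})_x$, so that
$$
\|\tilde{u}_t\|^2 \leqslant C\|\tilde{u}_{xx}\|^2 + C\|(\tilde{u}\tilde{v} + \bar{u}\tilde{v} + \bar{v}\tilde{u})_x\|^2 .
$$
The first term integrates to a finite quantity by Lemma \ref{lem2}. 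For the second I would simply reuse the bounds \eqref{z1}--\eqref{z2} established inside the proof of Lemma \ref{lem2}, namely $\|(\tilde{u}\tilde{v} + \bar{u}\tilde{v} + \bar{v}\tilde{u})_x\|^2 \leqslant C(\|\tilde{u}_x\|^2\|\tilde{v}_x\|^2 + \|\tilde{u}_x\|^2 + \|\tilde{v}_x\|^2)$.

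Next I would integrate in time. Lemma \ref{lem1} gives $\int_0^t(\|\tilde{u}_x\|^2 + \|\tilde{v}_x\|^2_{\omega})\,\mathrm{d}\tau \leqslant C$, and since $\omega \geqslant \underline{\omega} > 0$ by \eqref{y5} this also controls $\int_0^t(\|\tilde{u}_x\|^2 + \|\tilde{v}_x\|^2)\,\mathrm{d}\tau$; Lemma \ref{lem2} gives $\sup_{[0,t]}(\|\tilde{u}_x\|^2 + \|\tilde{v}_x\|^2) \leqslant C$. Hence $\int_0^t \|\tilde{u}_x\|^2\|\tilde{v}_x\|^2\,\mathrm{d}\tau \leqslant \big(\sup_{[0,t]}\|\tilde{v}_x\|^2\big)\int_0^t\|\tilde{u}_x\|^2\,\mathrm{d}\tau \leqslant C$, and combining with the $\|\tilde{u}_{xx}\|^2$ bound yields $\int_0^t \|\tilde{u}_t\|^2\,\mathrm{d}\tau \leqslant C$.

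The estimate for $\tilde{v}_t$ follows the same pattern. From \eqref{PS-2}, $\tilde{v}_t = \varepsilon\tilde{v}_{xx} + \varepsilon(\tilde{v}^2 + 2\bar{v}\tilde{v})_x - \mu\tilde{u}_x$, so $\|\tilde{v}_t\|^2 \leqslant C(\|\tilde{v}_{xx}\|^2 + \|\tilde{v}\tilde{v}_x\|^2 + \|(\bar{v}\tilde{v})_x\|^2 + \|\tilde{u}_x\|^2)$, and the nonlinear term is handled via $\|\tilde{v}\tilde{v}_x\|^2 \leqslant \|\tilde{v}\|_\infty^2\|\tilde{v}_x\|^2 \leqslant C\|\tilde{v}_x\|^4 \leqslant C\big(\sup_{[0,t]}\|\tilde{v}_x\|^2\big)\|\tilde{v}_x\|^2$; the remaining smooth-coefficient term is dominated by $C(\|\tilde{v}_x\|^2 + \|\tilde{v}\|^2)$. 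Integrating in time and invoking Lemmas \ref{lem1} and \ref{lem2} once more gives $\int_0^t\|\tilde{v}_t\|^2\,\mathrm{d}\tau \leqslant C$, completing the proof. I do not expect a genuine obstacle here: the lemma is a bookkeeping consequence of the two preceding \emph{a priori} estimates, the only point requiring any care being to split each nonlinear product into an $L^\infty$-in-time factor (bounded by Lemma \ref{lem2}) multiplied by an $L^2$-in-time factor (bounded by Lemma \ref{lem1}).
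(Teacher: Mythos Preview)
Your proposal is correct and follows essentially the same approach as the paper: read $\tilde u_t,\tilde v_t$ off \eqref{PS-1}--\eqref{PS-2}, reuse \eqref{z1}--\eqref{z2} for the nonlinear and steady-state terms, and close by integrating in time against the bounds of Lemmas~\ref{lem1} and~\ref{lem2}. The only cosmetic difference is that the paper immediately absorbs the product $\|\tilde u_x\|^2\|\tilde v_x\|^2$ into $C\|\tilde u_x\|^2$ via the sup bound from Lemma~\ref{lem2}, whereas you carry it along and split it as $\sup\times L^2_t$ after integrating; the content is identical.
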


\begin{proof}
By \eqref{PS-1}, \eqref{z1}, \eqref{z2}, and Lemma \ref{lem2}, we can show that 
\begin{align*}
\|\tilde{u}_t\|^2 \leqslant C\left(\|\tilde{u}_{xx}\|^2 + \|\tilde{u}_x\|^2 + \|\tilde{v}_x\|^2\right).
\end{align*}
Similarly, by \eqref{PS-2}, we have 
\begin{align*}
\|\tilde{v}_t\|^2 \leqslant C\left(\|\tilde{v}_{xx}\|^2 + \|\tilde{u}_x\|^2 + \|\tilde{v}_x\|^2\right).
\end{align*}
Integrating with respect to $t$ and applying Lemmas \ref{lem1} and \ref{lem2} complete the proof.
\end{proof}

\begin{lemma}\label{lem4}
Under the hypotheses of Theorem \ref{thm}, there exists a constant $C>0$, such that
$$
\|\tilde u_{xx}(t)\|^2+\|\tilde v_{xx}(t)\|^2+\int_0^t \left(\|\tilde u_{xxx}(\tau)\|^2 + \|\tilde v_{xxx}(\tau)\|^2\right) \mathrm{d}\tau \leqslant C.
$$
\end{lemma}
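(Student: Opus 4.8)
The plan is to bypass a direct $\partial_x^2$-energy estimate — which would produce boundary contributions at $x=0,1$ involving $\tilde u_{xt}$, $\tilde u_{xx}$, $\tilde v_{xt}$, $\tilde v_{xx}$ that the energy does not control — and instead to combine a \emph{time}-differentiated energy estimate with the equations \eqref{PS-1}--\eqref{PS-2} read as elliptic relations in $x$. The point is that $\partial_t$ preserves the homogeneous Dirichlet boundary condition $\tilde u=\tilde v=0$ at $x=0,1$, so after differentiating \eqref{PS} in $t$ and pairing with $\tilde u_t$, $\tilde v_t$, every boundary term produced by integration by parts vanishes.

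I would first differentiate \eqref{PS-1} and \eqref{PS-2} in $t$ (rigorously at the Galerkin level or via difference quotients), take $L^2$ inner products with $\tilde u_t$ and $\tilde v_t$, and integrate by parts. Abbreviating $N=\tilde u\tilde v+\bar u\tilde v+\bar v\tilde u$ and $M=\tilde v^2+2\bar v\tilde v$, this gives
\begin{align*}
\frac12\frac{\mathrm{d}}{\mathrm{d}t}\left(\|\tilde u_t\|^2+\|\tilde v_t\|^2\right)+\mathsf{d}\|\tilde u_{xt}\|^2+\varepsilon\|\tilde v_{xt}\|^2
=\chi\!\int_0^1 N_t\,\tilde u_{xt}\,\mathrm{d}x-\varepsilon\!\int_0^1 M_t\,\tilde v_{xt}\,\mathrm{d}x+\mu\!\int_0^1 \tilde u_t\,\tilde v_{xt}\,\mathrm{d}x.
\end{align*}
Since $\|N_t\|^2+\|M_t\|^2\leqslant C\bigl(1+\|\tilde u\|_\infty^2+\|\tilde v\|_\infty^2\bigr)\bigl(\|\tilde u_t\|^2+\|\tilde v_t\|^2\bigr)$ and $\|\tilde u\|_\infty,\|\tilde v\|_\infty$ are controlled by \eqref{y1} and the one-dimensional embedding $H^1\hookrightarrow L^\infty$, Cauchy's inequality (absorbing $\|\tilde u_{xt}\|^2$, $\|\tilde v_{xt}\|^2$ into the dissipation) reduces this to $\frac{\mathrm{d}}{\mathrm{d}t}(\|\tilde u_t\|^2+\|\tilde v_t\|^2)+\mathsf{d}\|\tilde u_{xt}\|^2+\varepsilon\|\tilde v_{xt}\|^2\leqslant C(\|\tilde u_t\|^2+\|\tilde v_t\|^2)$. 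Evaluating \eqref{PS-1}--\eqref{PS-2} at $t=0$ and using $\tilde u_0,\tilde v_0\in H^2(0,1)$ bounds $\|\tilde u_t(0)\|^2+\|\tilde v_t(0)\|^2$ by the data; integrating in $t$ and invoking Lemma \ref{lem3} then yields the uniform-in-time bound
\begin{align*}
\|\tilde u_t(t)\|^2+\|\tilde v_t(t)\|^2+\int_0^t\left(\|\tilde u_{xt}(\tau)\|^2+\|\tilde v_{xt}(\tau)\|^2\right)\mathrm{d}\tau\leqslant C.
\end{align*}

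Then I would recover the spatial bounds from the equations themselves. Writing \eqref{PS-1} as $\mathsf{d}\tilde u_{xx}=\tilde u_t+\chi N_x$ and \eqref{PS-2} as $\varepsilon\tilde v_{xx}=\tilde v_t+\mu\tilde u_x-\varepsilon M_x$, and estimating $\|N_x\|,\|M_x\|\leqslant C$ exactly as in \eqref{z1}--\eqref{z2} (via Lemmas \ref{lem1}--\ref{lem2}, \eqref{y1}, and $H^1\hookrightarrow L^\infty$), the previous step gives $\|\tilde u_{xx}(t)\|^2+\|\tilde v_{xx}(t)\|^2\leqslant C$. Differentiating these two identities once more in $x$ gives $\mathsf{d}\tilde u_{xxx}=\tilde u_{xt}+\chi N_{xx}$ and $\varepsilon\tilde v_{xxx}=\tilde v_{xt}+\mu\tilde u_{xx}-\varepsilon M_{xx}$, so it suffices to show $\int_0^t(\|N_{xx}\|^2+\|M_{xx}\|^2)\,\mathrm{d}\tau\leqslant C$. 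Most terms of $N_{xx}$ and $M_{xx}$ are a bounded steady-state factor times one of $\tilde u,\tilde u_x,\tilde u_{xx},\tilde v,\tilde v_x,\tilde v_{xx}$, or products like $\tilde v\,\tilde u_{xx}$, $\tilde u\,\tilde v_{xx}$ estimated through $\|\tilde v\|_\infty\|\tilde u_{xx}\|$; their time integrals are bounded by $\int_0^t(\|\tilde u\|^2+\|\tilde u_x\|^2+\|\tilde v\|^2+\|\tilde v_x\|^2)\,\mathrm{d}\tau\leqslant C$ (Lemma \ref{lem1}) and $\int_0^t(\|\tilde u_{xx}\|^2+\|\tilde v_{xx}\|^2)\,\mathrm{d}\tau\leqslant C$ (Lemma \ref{lem2}). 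The delicate terms are the quadratic ones $\tilde u_x\tilde v_x$ and $\tilde v_x^2$, which I would handle via the one-dimensional interpolation $\|\tilde u_x\|_\infty^2\leqslant C(\|\tilde u_x\|^2+\|\tilde u_x\|\,\|\tilde u_{xx}\|)$, giving for instance
\begin{align*}
\int_0^t\|\tilde u_x\tilde v_x\|^2\,\mathrm{d}\tau\leqslant\Bigl(\sup_{\tau\leqslant t}\|\tilde v_x\|^2\Bigr)\int_0^t\|\tilde u_x\|_\infty^2\,\mathrm{d}\tau\leqslant C\int_0^t\left(\|\tilde u_x\|^2+\|\tilde u_x\|\,\|\tilde u_{xx}\|\right)\mathrm{d}\tau\leqslant C,
\end{align*}
the last step by Cauchy--Schwarz in $\tau$ together with $\int_0^t\|\tilde u_x\|^2\,\mathrm{d}\tau\leqslant C$ (Lemma \ref{lem1}) and $\int_0^t\|\tilde u_{xx}\|^2\,\mathrm{d}\tau\leqslant C$ (Lemma \ref{lem2}). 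Combining these with $\int_0^t(\|\tilde u_{xt}\|^2+\|\tilde v_{xt}\|^2)\,\mathrm{d}\tau\leqslant C$ and $\int_0^t\|\tilde u_{xx}\|^2\,\mathrm{d}\tau\leqslant C$ gives $\int_0^t(\|\tilde u_{xxx}\|^2+\|\tilde v_{xxx}\|^2)\,\mathrm{d}\tau\leqslant C$, which together with the pointwise-in-time bound on $\|\tilde u_{xx}\|^2+\|\tilde v_{xx}\|^2$ is the assertion of the lemma.

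The hard part will be the first step: controlling $N_t$ and $M_t$ with only the information in \eqref{y1} (smallness of $\|\tilde u\|+\|\tilde v\|$, but merely finiteness of $\|\tilde u_x\|+\|\tilde v_x\|$), and — more conceptually — recognizing that one must differentiate in $t$ rather than in $x$ so that the homogeneous Dirichlet conditions annihilate every boundary term. A secondary difficulty is the bookkeeping in the last step: the quadratic terms $\tilde u_x\tilde v_x$ and $\tilde v_x^2$ are not time-integrable by crude $L^\infty$ bounds, and treating them is precisely where the two dissipative gains $\int\|\tilde u_x\|^2$ (Lemma \ref{lem1}) and $\int\|\tilde u_{xx}\|^2$ (Lemma \ref{lem2}) must be used in tandem.
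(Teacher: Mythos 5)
Your proposal follows essentially the same route as the paper: differentiate \eqref{PS} in $t$, test with $(\tilde u_t,\tilde v_t)$ so the homogeneous Dirichlet data kill all boundary terms, use \eqref{y1} and Lemmas \ref{lem1}, \ref{lem3} to obtain exactly the bound \eqref{y26}, and then read the equations as elliptic relations in $x$ to recover $\|\tilde u_{xx}\|$, $\|\tilde v_{xx}\|$ pointwise in time and the $L^2_t$ bounds on the third derivatives. Your second stage correctly supplies the details the paper omits ``for brevity,'' including the interpolation treatment of the quadratic terms $\tilde u_x\tilde v_x$ and $\tilde v_x^2$, so the argument is complete and consistent with the paper's.
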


\begin{proof} Differentiating \eqref{PS} with respect to $t$, we obtain
\begin{subequations}
\begin{alignat}{2}
\tilde u_{tt} &= \mathsf{d} \tilde u_{xxt} - \chi (\tilde u \tilde v + \bar u \tilde v + \bar v \tilde u)_{xt}, \label{y21} \\
\tilde v_{tt} &= \varepsilon \tilde v_{xxt} - \mu \tilde u_{xt} + \varepsilon (\tilde v^2 + 2 \tilde v \bar v)_{xt}. \label{y22}
\end{alignat}
\end{subequations}
Multiplying \eqref{y21} by $\tilde u_t$ and integrating with respect to $x$, we have
\begin{align}\label{y23}
\frac{\mathrm{d}}{\mathrm{d}t} \|\tilde u_t\|^2 + \mathsf{d} \|\tilde u_{xt}\|^2 \leqslant C \|(\tilde u \tilde v + \bar u \tilde v + \bar v \tilde u)_t\|^2.
\end{align}
The right-hand side of \eqref{y23} can be estimated as 
\begin{align*}
\|(\tilde u \tilde v + \bar u \tilde v + \bar v \tilde u)_t\|^2 \leqslant C \left( \|\tilde v_x\|^2\|\tilde u_t\|^2 + \|\tilde u_x\|^2 \|\tilde v_t\|^2 + \|\tilde v_t\|^2 + \|\tilde u_t\|^2 \right).
\end{align*}
This updates \eqref{y23} as 
\begin{align}\label{y24}
\frac{\mathrm{d}}{\mathrm{d}t} \|\tilde u_t\|^2 + \mathsf{d} \|\tilde u_{xt}\|^2 \leqslant C \left( \|\tilde v_x\|^2\|\tilde u_t\|^2 + \|\tilde u_x\|^2 \|\tilde v_t\|^2 + \|\tilde v_t\|^2 + \|\tilde u_t\|^2 \right).
\end{align}
Similarly, we have 
\begin{align}\label{y25}
\frac{\mathrm{d}}{\mathrm{d}t} \|\tilde v_t\|^2 + \varepsilon \|\tilde v_{xt}\|^2 \leqslant C\left(\|\tilde{v}_x\|^2 \|\tilde{v}_t\|^2 + \|\tilde{u}_t\|^2 + \|\tilde{v}_t\|^2 \right).
\end{align}
Taking the sum of \eqref{y24} and \eqref{y25}, we obtain
\begin{align*}
\frac{\mathrm{d}}{\mathrm{d}t} \left( \|\tilde u_t\|^2 + \|\tilde v_t\|_{\omega}^2 \right) + \mathsf{d} \|\tilde u_{xt}\|^2 + \varepsilon \|\tilde v_{xt}\|_{\omega}^2 \leqslant C \left(\|\tilde{u}_x\|^2+\|\tilde v_x\|^2 + 1\right) \left( \|\tilde u_t\|^2 +\|\tilde v_t\|^2 \right).
\end{align*}
Applying Gr\"onwall's inequality, together with Lemmas \ref{lem1} and \ref{lem3},  we arrive at 
\begin{align}\label{y26}
\|\tilde u_t(t)\|^2 + \|\tilde v_t(t)\|_{\omega}^2 + \int_0^t\left(\|\tilde u_{xt}(\tau)\|^2 + \|\tilde v_{xt}(\tau)\|^2\right)\mathrm{d}\tau \leqslant C.
\end{align}
With \eqref{y26} at our disposal, the proof can be completed by repeatedly using the equations in \eqref{PS} and applying the established {\it a priori} estimates. We omit the details for brevity.
\end{proof}

The last lemma establishes the exponential decay of the perturbation.

\begin{lemma}\label{lem5}
Under the hypotheses of Theorem \ref{thm}, $\|\tilde u(t)\|_{2}^2+\|\tilde v(t)\|_2^2$ converges to zero exponentially rapidly as $t\to\infty$. 
\end{lemma}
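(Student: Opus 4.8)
The plan is to derive exponential decay of $\|\tilde u\|_2^2+\|\tilde v\|_2^2$ by climbing a short ladder of energy estimates, at each successive level coupling the newly controlled quantity to a large multiple of the one already shown to decay beneath it. At the zeroth level I would use \eqref{y16} directly: since $\tilde u$ and $\tilde v$ vanish on the boundary, Poincar\'e's inequality turns the dissipation $\mathsf d\mu\|\tilde u_x\|^2+\varepsilon\|\tilde v_x\|_\omega^2$ into a lower bound $\kappa_0(\mu\|\tilde u\|^2+\|\tilde v\|_\omega^2)$ with $\kappa_0>0$ depending only on $\mathsf d$, $\varepsilon$ and the weight bounds \eqref{y5}; Gr\"onwall then gives $\mu\|\tilde u(t)\|^2+\|\tilde v(t)\|_\omega^2\leqslant\bigl(\mu\|\tilde u_0\|^2+\|\tilde v_0\|_\omega^2\bigr)e^{-\kappa_0 t}$, and by \eqref{y5} the same decay holds for the unweighted $L^2$ norms. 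For the first derivatives I would revisit \eqref{z3}: under the \emph{a priori} assumption \eqref{y1} its right-hand side is $\leqslant C(\|\tilde u_x\|^2+\|\tilde v_x\|^2)$, so I would form $\mathcal E_1:=A\bigl(\mu\|\tilde u\|^2+\|\tilde v\|_\omega^2\bigr)+\|\tilde u_x\|^2+\|\tilde v_x\|^2$, add $A$ times \eqref{y16} to \eqref{z3}, and choose $A$ large enough that the combined dissipation absorbs the $C(\|\tilde u_x\|^2+\|\tilde v_x\|^2)$ term with a positive remainder. Since $\mathcal E_1\lesssim\|\tilde u_x\|^2+\|\tilde v_x\|^2$ by Poincar\'e, that remainder dominates $\mathcal E_1$ itself, giving $\mathcal E_1'+c_1\mathcal E_1\leqslant 0$ and hence exponential decay of $\|\tilde u_x\|^2+\|\tilde v_x\|^2$. (Interpolating $\|\tilde u_x\|\leqslant\sqrt2\|\tilde u\|^{1/2}\|\tilde u_{xx}\|^{1/2}$ against the uniform bound on $\|\tilde u_{xx}\|$ from Lemma \ref{lem4} is an equally short alternative.)

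For the top level I would pass through the time derivatives. Summing \eqref{y24} and \eqref{y25} and using \eqref{y1} to bound the prefactor $\|\tilde u_x\|^2+\|\tilde v_x\|^2+1$ by a constant yields $\frac{\mathrm d}{\mathrm dt}(\|\tilde u_t\|^2+\|\tilde v_t\|_\omega^2)+\mathsf d\|\tilde u_{xt}\|^2+\varepsilon\|\tilde v_{xt}\|_\omega^2\leqslant C(\|\tilde u_t\|^2+\|\tilde v_t\|^2)$. From \eqref{PS} together with \eqref{z1}, \eqref{z2} and \eqref{y1} I get the elliptic bounds $\|\tilde u_t\|^2\leqslant C(\|\tilde u_{xx}\|^2+\|\tilde u_x\|^2+\|\tilde v_x\|^2)$ and $\|\tilde v_t\|^2\leqslant C(\|\tilde v_{xx}\|^2+\|\tilde u_x\|^2+\|\tilde v_x\|^2)$, so the right-hand side above is dominated by the first-order dissipation $\mathsf d\|\tilde u_{xx}\|^2+\varepsilon\|\tilde v_{xx}\|^2$ produced in the previous step. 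Setting $\mathcal E_2:=B\mathcal E_1+\|\tilde u_t\|^2+\|\tilde v_t\|_\omega^2$ with $B$ large and repeating the absorption argument gives $\mathcal E_2'+c_2\mathcal E_2\leqslant 0$, hence exponential decay of $\|\tilde u_t\|^2+\|\tilde v_t\|^2$. Reading the elliptic bounds in the other direction, $\|\tilde u_{xx}\|^2+\|\tilde v_{xx}\|^2\leqslant C(\|\tilde u_t\|^2+\|\tilde v_t\|^2+\|\tilde u_x\|^2+\|\tilde v_x\|^2)$ decays exponentially as well, and adding the three levels yields $\|\tilde u(t)\|_2^2+\|\tilde v(t)\|_2^2\leqslant Ce^{-ct}$ with $c=\min\{\kappa_0,c_1,c_2\}$.

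The main obstacle is precisely that the higher-order inequalities \eqref{z3}, \eqref{y24}, \eqref{y25} carry right-hand sides of the shape (a quantity that is bounded, but not small) times (the very energy being estimated), so no single-level Gr\"onwall argument produces decay on its own. The device that resolves this is the cascade $\mathcal E_1\lesssim\mathcal E_2$: at each level the dissipation generated one step below — which, after Poincar\'e and the elliptic relation $\|\tilde u_{xx}\|\lesssim\|\tilde u_t\|+\text{lower-order terms}$, controls the full energy at the current level — is borrowed with a large coupling constant to soak up the troublesome terms. The point requiring care is to verify that, after this absorption, the surviving dissipation is still equivalent to the combined functional $\mathcal E_k$, so that $\mathcal E_k'+c_k\mathcal E_k\leqslant 0$ genuinely closes at every level.
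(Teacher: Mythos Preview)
Your proposal is correct and follows essentially the same cascade-of-energies approach as the paper: the paper obtains zeroth-order decay from \eqref{y16} via Poincar\'e, then couples a large multiple of \eqref{y16} with the updated \eqref{z3} (which is \eqref{y27}) to get a functional $\mathsf X\cong\|\tilde u\|_1^2+\|\tilde v\|_1^2$ satisfying $\mathsf X'+\mathsf Y\leqslant 0$ with $\mathsf Y\gtrsim\mathsf X$, exactly your $\mathcal E_1$ construction, and for the second-order step simply says ``follows similarly''. Your explicit route for that top step---passing through $\|\tilde u_t\|^2+\|\tilde v_t\|_\omega^2$ via \eqref{y24}--\eqref{y25} and reading off $\|\tilde u_{xx}\|^2+\|\tilde v_{xx}\|^2$ from the elliptic relation---is a natural way to fill in that omission, consistent with how the paper itself proves Lemma~\ref{lem4}.
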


\begin{proof}
The exponential decay of $\|\tilde{u}(t)\|^2+\|\tilde{v}(t)\|^2$ follows directly from \eqref{y16}, \eqref{y5}, and Poincar\'e's inequality. For the first order spatial derivative, by Lemma \ref{lem2}, we update \eqref{z3} as  
\begin{align}\label{y27}
\frac{\mathrm d}{\mathrm dt}\left(\|\tilde{u}_x\|^2 + \|\tilde{v}_x\|^2\right) + \mathsf{d}\|\tilde{u}_{xx}\|^2 + \varepsilon \|\tilde{v}_{xx}\|^2
\leqslant C\left(\|\tilde{u}_x\|^2 + \|\tilde{v}_x\|^2 \right).
\end{align}
By coupling \eqref{y16} and \eqref{y27} together, and applying Poincar\'e's inequality, we deduce 
\begin{align*}
\frac{\mathrm d}{\mathrm dt}\mathsf{X}(t) + \mathsf{Y}(t)\leqslant 0,
\end{align*}
where the quantities satisfy 
\begin{align}\label{y28}
\mathsf{X}(t) \cong \|\tilde{u}(t)\|_1^2 + \|\tilde{v}(t)\|_1^2,\qquad \mathsf{Y}(t) \cong \|\tilde{u}(t)\|_2^2 + \|\tilde{v}(t)\|_2^2.
\end{align}
The exponential decay of $\|\tilde{u}(t)\|_1^2 + \|\tilde{v}(t)\|_1^2$ follows directly from \eqref{y28}. The decay of the second order spatial derivative of the perturbation follows similarly, and we omit the details for brevity.
\end{proof}

\section{Numerical Experiment}

The explicit steady-state solutions presented in Table \ref{table-1} are derived under specific parametric and boundary constraints. In particular, the boundary conditions must satisfy the mutually exclusive restrictions outlined in Section 2, which are essential to ensure the validity of the solutions. These constraints correspond to distinct curves in the $\alpha_1\alpha_2$- and $\beta_1\beta_2$-planes, which do not span the entire parameter space. For points not located on these curves, our analytical approach does not yield any insight into the existence of nontrivial solutions to system \eqref{ss}, let alone their explicit forms. This analytical gap is not merely a theoretical shortcoming but a significant practical limitation, as real-world systems often operate under boundary conditions that do not conform to these idealizations. This underscores the critical need for complementary numerical investigations of the IBVP for system \eqref{Tmodel}, particularly in cases where the boundary conditions deviate from the constraints specified in Section 2.

To explore the behavior of the system beyond analytically tractable regimes and understand its potential responses in application-relevant scenarios, we conduct numerical simulations of the initial-boundary value problem on the unit interval $(0,1)$ using a finite difference scheme with mesh sizes $(\triangle x, \triangle t) = (0.002,0.001)$. The parameter values are chosen as follows:
\begin{align}\label{parameters}
\mathsf{d} = 2,\quad \chi = 1,\quad \varepsilon = 0.5,\quad \mu = 1.
\end{align}
We examine the following sets of boundary conditions:

\begin{table}[h]
\centering
\caption{Boundary conditions for numerical simulation of system \eqref{Tmodel}}\label{table-3}
\renewcommand{\arraystretch}{1.15}
\begin{tabular}{|c|c|c|c|c|}
\hline
 & {\scriptsize$u(0)$} & {\scriptsize$u(1)$} & {\scriptsize$v(0)$} & {\scriptsize$v(1)$} \\[1mm]
\hline
{\scriptsize power} & {\scriptsize$4$} & {\scriptsize$\frac{40}{(\sqrt{10}+2)^2}$} & {\scriptsize$-\frac{8}{\sqrt{10}}$} & {\scriptsize$-\frac{8}{\sqrt{10}+2}$} \\[1mm] 
\hline
{\scriptsize$\sec^2$ \& $\tan$} & {\scriptsize$4$} & {\scriptsize$\sec^{2}\left(\tfrac{1}{\sqrt{10}}+\tfrac{\pi}{3}\right)$} & {\scriptsize$\frac{4\sqrt{3}}{\sqrt{10}}$} & {\scriptsize$\frac{4}{\sqrt{10}}\tan\left(\tfrac{1}{\sqrt{10}}+\tfrac{\pi}{3}\right)$} \\[1mm] 
\hline
{\scriptsize$\csc^2$ \& $\cot$} & {\scriptsize$4$} & {\scriptsize$\csc^{2}\left(\sqrt{\tfrac{1}{10}}+\tfrac{\pi}{6}\right)$} & {\scriptsize$-\frac{4\sqrt{3}}{\sqrt{10}}$} & {\scriptsize$-\frac{4}{\sqrt{10}}\cot\left(\sqrt{\tfrac{1}{10}}+\tfrac{\pi}{6}\right)$} \\[1mm]
\hline
{\scriptsize$\operatorname{csch}^2$ \& $\coth$} & {\scriptsize$4$} & {\scriptsize$\operatorname{csch}^2\left(\sqrt{\tfrac{1}{10}} + \ln\!\tfrac{1+\sqrt{5}}{2}\right)$} & {\scriptsize$-\frac{4}{\sqrt{10}}$} & {\scriptsize$-\frac{4}{\sqrt{10}}\coth\left(\sqrt{\tfrac{1}{10}} + \ln\!\tfrac{1+\sqrt{5}}{2}\right)$} \\[1mm] 
\hline
{\scriptsize undiscovered} & {\scriptsize$4$} & {\scriptsize$21$} & {\scriptsize$-0.7$} & {\scriptsize$0.7$} \\[1mm]
\hline
\end{tabular}
\end{table}

\noindent It can be verified that, given the parameter values in \eqref{parameters}, the first four sets of boundary conditions in Table \ref{table-3} satisfy the constraints in Section 2, while the fifth one does not. The corresponding initial conditions in Table \ref{table-4} are chosen to be compatible with the boundary conditions.

\begin{table}[h]
\centering
\caption{Initial conditions for numerical simulation of  system \eqref{Tmodel}}\label{table-4}
\renewcommand{\arraystretch}{1.15}
\begin{tabular}{|c|c|c|}
\hline
 & {\scriptsize$u_0$} & {\scriptsize$v_0$} \\[1mm]
\hline
{\scriptsize power} & {\scriptsize$4+\left(\frac{40}{(\sqrt{10}+2)^2}-4\right)x$} & {\scriptsize$-\frac{8}{\sqrt{10}} + \left(\frac{8}{\sqrt{10}}-\frac{8}{\sqrt{10}+2}\right)x$} \\[1mm] 
\hline
{\scriptsize$\sec^2$ \& $\tan$} & {\scriptsize$4+\left(\sec^{2}\left(\tfrac{1}{\sqrt{10}}+\tfrac{\pi}{3}\right)-4\right)x$} & {\scriptsize$\frac{4\sqrt{3}}{\sqrt{10}} + \left(\frac{4}{\sqrt{10}}\tan\left(\tfrac{1}{\sqrt{10}}+\tfrac{\pi}{3}\right) - \frac{4\sqrt{3}}{\sqrt{10}}\right)x$} \\[1mm] 
\hline
{\scriptsize$\csc^2$ \& $\cot$} & {\scriptsize$4+\left(\csc^{2}\left(\sqrt{\tfrac{1}{10}}+\tfrac{\pi}{6}\right)-4\right)x$} & {\scriptsize$-\frac{4\sqrt{3}}{\sqrt{10}} + \left(-\frac{4}{\sqrt{10}}\cot\left(\sqrt{\tfrac{1}{10}}+\tfrac{\pi}{6}\right) + \frac{4\sqrt{3}}{\sqrt{10}}\right)x$} \\[1mm]
\hline
{\scriptsize$\operatorname{csch}^2$ \& $\coth$} & {\scriptsize$4+(\operatorname{csch}^2\left(\sqrt{\tfrac{1}{10}} + \ln\!\tfrac{1+\sqrt{5}}{2}\right)-4)\,x$} & {\scriptsize$-\frac{4}{\sqrt{10}} + \left(-\frac{4}{\sqrt{10}}\coth\left(\sqrt{\tfrac{1}{10}} + \ln\tfrac{1+\sqrt{5}}{2}\right)+\frac{4}{\sqrt{10}}\right)x$} \\[1mm] 
\hline
{\scriptsize undiscovered} & {\scriptsize$4+17\sin\left(\tfrac{\pi}{2}x\right)$} & {\scriptsize$-0.7+1.4\sin\left(\tfrac{\pi}{2}x\right)$} \\[1mm]
\hline
\end{tabular}
\end{table}

\begin{figure}[htbp]
\centering
\includegraphics[width=15cm,height=6cm]{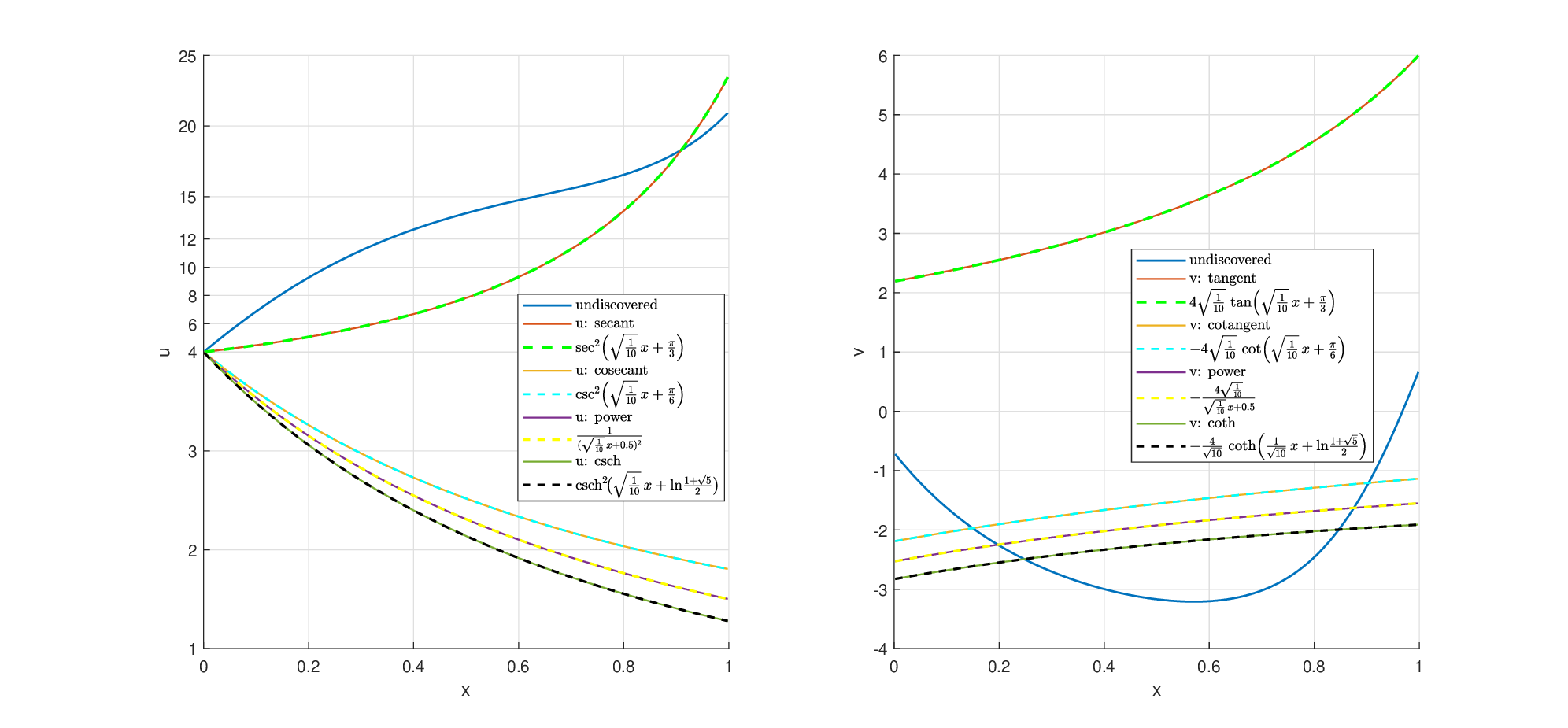}
\caption{Numerical solutions to \eqref{Tmodel} at $t=2$, with parameters in \eqref{parameters}, boundary values in Tables \ref{table-3}, initial conditions in Table \ref{table-4}, and comparison with analytic baselines.}
\label{fig:myplot}  
\end{figure}

Numerical solutions are computed up to time $t = 2$. As shown in Figure \ref{fig:myplot}, all solutions converge to steady states as time evolves. Except the solid blue curves, the other solid curves match perfectly with the corresponding explicit steady-state solutions (dotted lines), validating both the analytical results and the numerical method. This agreement highlights the robustness and applicability of the explicit solutions within their defined parametric regimes.

In contrast, the solid blue curve, obtained under boundary conditions that violate the constraints of Section 2, exhibits a qualitatively different structure, featuring both convex and concave segments in the bulk. This morphology stands in sharp contrast to the uniformly convex profiles observed in the top four cases. Such a deviation not only confirms the sensitivity of the steady-state structure to boundary conditions but also suggests the existence of richer and more complex solution behaviors outside the currently characterized regimes.

The existence of non-monotonic and hybrid convex-concave profiles under more general boundary conditions poses significant challenges for analytical treatment. It also emphasizes the need to extend the current solution framework to encompass a broader class of boundary data. Identifying explicit solutions to system \eqref{ss} under general Dirichlet boundary conditions \eqref{BC0}, particularly those yielding non-standard spatial morphologies, remains an important and non-trivial task for future work. A deeper understanding of such patterns could provide valuable insights into the nonlinear coupling and boundary-driven structuring in chemotactic and related systems.


\section{Conclusions and Outlook}

We have constructed three distinct classes of explicit steady-state solutions for the transformed chemotaxis system \eqref{Tmodel} under the parameter regime \eqref{para}. Using energy methods, we further established the asymptotic stability of these solutions under mild smallness conditions on the initial perturbations. However, it should be noted that the analysis presented in this work is conducted under the specific parameter choice $\mathsf{m} = 1$. The case where $\mathsf{m} \neq 1$ is currently under investigation and will be addressed in our subsequent work.

Beyond theoretical interest, these explicit solutions, together with their accompanying stability analysis, provide a foundational framework for understanding and manipulating chemotactic behavior in more complex, application-oriented settings. In particular, two promising research directions emerge from this work: multi-domain problems and optimal control applications. Both avenues build directly on our precise characterization of the base model in confined domains and offer considerable potential for developing new mathematical tools to address real-world challenges in tissue engineering and precision medicine. These directions represent critical pathways for advancing chemotaxis models from theoretical description toward active intervention. The steady states we have rigorously derived serve as reliable computational benchmarks and a solid theoretical starting point for such further investigations. Future work will focus on developing rigorously verified numerical schemes for multi-domain systems and deriving first-order optimality conditions for the corresponding optimal control problems. Through close collaboration with experimental biologists and engineers, these mathematical tools hold transformative potential for applications in regenerative medicine, cancer therapy, and biotechnological design.


\section*{Acknowledgement}
Support of this work came in part from the National Natural Science Foundation of China No.\,12171116 and No.\,12571527 (L. Xue), and Fundamental Research Funds for the Central Universities of China No.\,3072020CFT2402 (L. Xue), No.\,3072022TS2404, and No.\,3072024WD2401 (K. Zhao).


\end{document}